\documentclass[12pt]{article}

\usepackage{lmodern,amsmath,amssymb,amsthm,graphicx,float,xcolor,mathtools,comment,tikz,circuitikz}
\usepackage{hyperref}

\usepackage[a4paper, margin=25mm]{geometry}

\DeclarePairedDelimiter{\n}{\lVert}{\rVert}
\newcommand{\floor}[1]{\left\lfloor #1 \right\rfloor}
\newcommand{\ceil}[1]{\left\lceil #1 \right\rceil}

\newtheorem{thm}{Theorem}[section]
\newtheorem{prop}{Proposition}[section]
\newtheorem{rema}[thm]{Remark}
\newtheorem{cor}[thm]{Corollary}
\newtheorem{lemma}[thm]{Lemma}
\newtheorem{defn}[thm]{Definition}
\newtheorem{prob}[thm]{Problem}

\newcommand{\dd}[1]{\,\mathrm{d}#1}
\def\bz{\mathbf{z}}
\def\bw{\mathbf{w}}
\def\diam{\mathrm{diam}}
\def\Loss{\mathcal{L}}
\def\P{\mathbb{P}}
\def\R{\mathbb{R}}
\def\Z{\mathbb{Z}}

\DeclareMathOperator{\hull}{hull}
\def\l{\lambda}
\def\d{\delta}
\def\eps{\varepsilon}

\title{The Loss of Tension in an Infinite Membrane with Holes of Decaying Spatial Density}

\author{Lukas Early\footnote{Lund University,  Centre for Mathematical Sciences, Lund, SE-22100, Sweden
} \and Stanislav Volkov$^{1}$}

\date{April 2026}
\begin{document}
\maketitle
\begin{abstract}
What is the effect of randomly removing material from an infinite stretched membrane? Under what conditions can the membrane still sustain tension? This problem was introduced by Robert Connelly in connection with applications of rigidity theory (see e.g.~\cite{Con}) in the natural sciences, and was later studied in~\cite{MRV}; a discrete version was also considered in~\cite{CRV}.

We study a mathematical framework based on a non-homogeneous Poisson point process whose intensity $\lambda$ tends to zero at infinity. The hole shapes are i.i.d.\ and independent of their locations. We show that if the intensity does not decay too quickly, then tension is still lost throughout the whole plane, as in the homogeneous model studied in~\cite{MRV}. Conversely, we give sufficient conditions under which complete loss of tension does not occur. Thus both destruction and non-destruction regimes are possible even when the intensity tends to zero, indicating a phase transition in the model.

The processes studied here are closely related to bootstrap percolation.
\end{abstract}
\smallskip

\par \noindent {\bf Keywords:} bootstrap percolation, Poisson point process, tension, rigidity. 
\par \noindent {\bf MSC2020 Classification.}
Primary: 60K35, 60D05.
Secondary: 60G55, 52A22, 52C25, 82B43.

\section{Introduction}\label{introMRV}
Let ${\cal M}$ be a stretched two-dimensional membrane clamped along its boundary. 
\begin{defn}\label{def:hole}
{\em A hole} $H$ is a bounded closed convex two-dimensional subset of $\R^2$ with non-empty interior, with a special point in its interior, called its {\em centre}. The {\em diameter} of a hole is the quantity $\diam(H)=\sup_{x,y\in H} \n{x-y}$,
where here and later throughout the paper $\n{z}$ denotes the Euclidean distance between $z\in\R^2$ and the origin ${\bf 0}=(0,0)$.
\end{defn}
A number of holes are punched in this membrane. Removing such a hole from the membrane leads to a redistribution of tension in the remaining region. With multiple holes, the behaviour becomes more complex: If two holes overlap, tension is lost at least on the convex hull of their union, not just their union. What happens next is that whenever this convex hull of the union intersects some other hole, the whole convex hull of the original hull and the new hole also loses tension (see Figure~\ref{fig123}), and the process continues ad infinitum. Note that the convex hull is a closure operator: it is extensive, monotone, and idempotent, and as a result, the order in which the convex hulls are taken does not matter. Consequently, we can define the ``limiting'' convex hull (formally defined as the infinite union of the convex hulls for each stage).

\begin{figure}
    \centering
\includegraphics[width=0.18\linewidth]{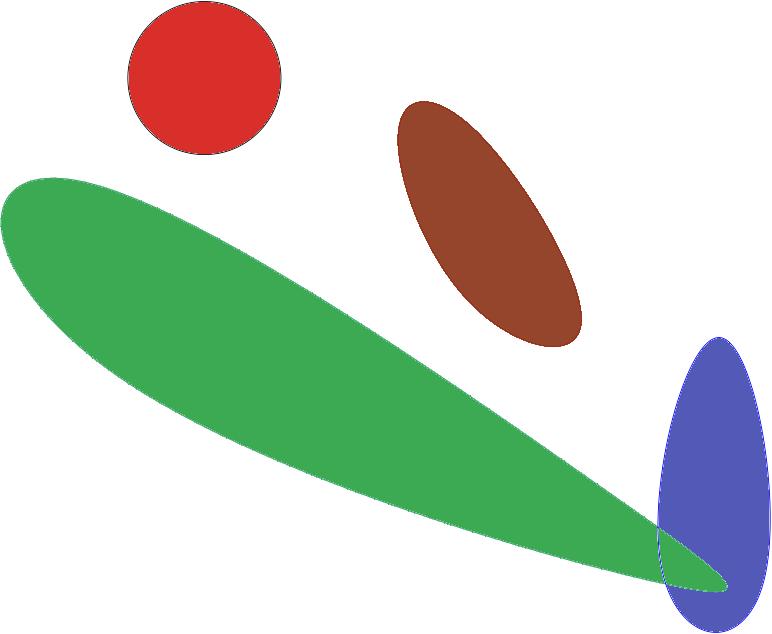}
\includegraphics[width=0.05\linewidth]{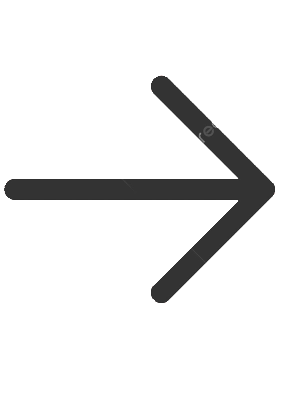}
\includegraphics[width=0.18\linewidth]{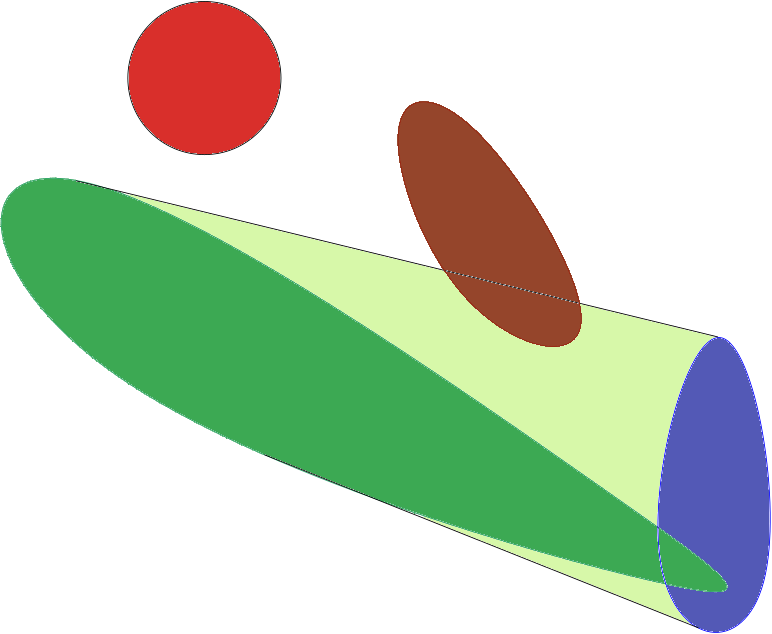}
\includegraphics[width=0.05\linewidth]{arrow.png}
\includegraphics[width=0.18\linewidth]{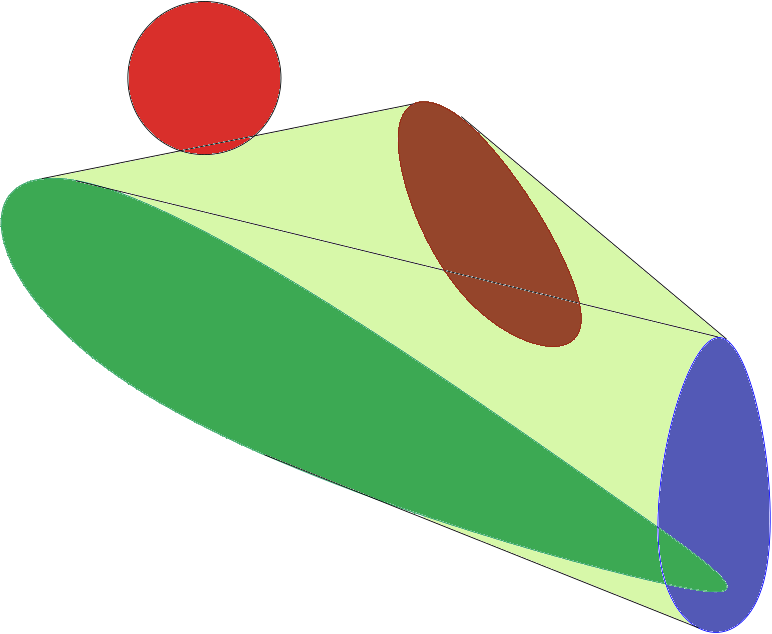}
\includegraphics[width=0.05\linewidth]{arrow.png}
\includegraphics[width=0.18\linewidth]{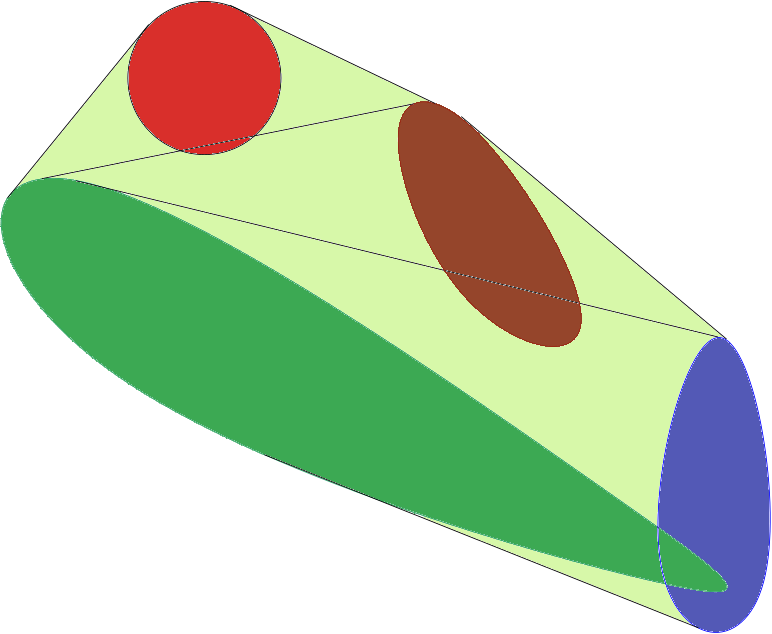}
    \caption{Sequential growth of convex hulls}
    \label{fig123}
\end{figure}

Formally, for $A\subseteq\R^2$, let $\hull(A)$ denote the convex hull of $A$,  the smallest convex set containing $A$, i.e.,
$$
\hull(A)=\bigcap_{C\subseteq \R^2\text{ is convex, } C\supseteq A} C.
$$
\begin{defn}
For a set \(A\subseteq \mathbb R^2\), let \(\mathcal C(A)\) be the collection of connected components of~\(A\). The {\em propagation of loss of tension} is the operator
\[
\phi(A)=\bigcup_{C\in\mathcal C(A)} \hull(C).
\]
\end{defn}
The process of propagation of loss of tension can be iterated. Let $\Loss_0=A$ and for $k\ge 1$  define $\Loss_k=\phi(\Loss_{k-1})$. Since $\{\Loss_k\}_k$ is an increasing sequence of sets, we can define the limit
$$
\Loss_\infty=\bigcup_{k=0}^\infty \Loss_k.
$$
If $\Loss_\infty=\R^2$, we say that {\em the whole membrane is destroyed} by the loss of tension propagation.
 
It was shown in~\cite{MRV} that if the initial configuration $\Loss_0$ consists of a collection of holes whose shapes are chosen randomly from the same distribution, and their centres are distributed according to a {\em homogeneous} spatial Poisson Point Process (later abbreviated as {\rm PPP}) with constant intensity $\lambda>0$, then the loss of tension process above will eventually cover the whole $\R^2$, leading to loss of tension everywhere, i.e., $\Loss_\infty=\R^2$. Notably, this fact holds for any value of $\lambda$, however small it is, implying that there is no phase transition in this model. This result is similar to those in bootstrap percolation models, which exhibit a similar behaviour, see e.g.~\cite{AL} and~\cite{Teng}; for a three-dimensional version see~\cite{vanEnter}.

The proof of the above fact heavily depends on the fact that the placement of holes by the PPP is homogeneous, so by e.g.\ the Borel-Cantelli lemma, there must be a region {\it somewhere} in the plane with a high concentration of holes, from which, using the sequential process of loss of tension, the plane gets covered. This method can be easily extended to the case when the intensity $\l=\l(\bz)$ depends on $\bz\in\R^2$ but $\liminf_{\n{\bz}\to\infty} \l(\bz)>0$. An interesting question therefore, is whether the complete loss of tension can occur when the intensity of the Poisson process decreases to zero. In this paper, we show that there are two functions $\bar\l,\underline{\l}:\R_+\to \R_+$, both converging to zero, such that the whole plane will be covered by the process if $\l(\bz)\ge \bar\l(\n{\bz})$, and on the contrary, that this will not happen a.s.\ when $\l(\bz)\le \underline{\l}(\n{\bz})$, thus suggesting a phase transition in the model.

The model described in this paper is also related to entanglement percolation~\cite{Madras}.

\section{Main results}
Assume that centres of holes are placed at some countable non-empty subset $X\subset \R^2$; the hole centred at $\bz\in X$ is denoted by $H_{\bz}$. Then the initial configuration of holes is given by
$$
\Loss_0=\Loss_0(X)=\bigcup_{\bz\in X} H_{\bz}\subseteq \R^2.
$$

We start with the simple case where all the holes are unit discs and the hole centres are the natural centres of those discs. Let $\Loss_\infty$ be the limiting area for the loss of tension. Note that $\Loss_\infty$ depends on $X$ only.
\begin{thm}\label{th1}
Suppose that all the holes are unit discs, with their natural centres being the holes' centres. Let $X$ be a PPP with the intensity $\l(\bz)$, such that for large~$\n{\bz}$ we have $\l(\bz)\ge g(\n{\bz})$ where
\begin{equation}\label{gdef}
g(r)= \frac{1}{\log^{1/2-\d} r}\qquad r>1    
\end{equation}
for some $\delta\in\left(0,\frac12\right)$. Then the membrane will eventually be destroyed a.s.,  i.e.\ $\P(\Loss_\infty(X)=\R^2)=1$.
\end{thm}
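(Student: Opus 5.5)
We argue in two stages: first find a nucleus, then show the destroyed region spreads from it to the whole plane. By monotonicity of $\phi$ in the initial set, we may reduce $X$ by thinning and assume $\l(\bz)=g(\n{\bz})$ for $\n{\bz}\ge r_0$. The key geometric fact is the following. Suppose the current lost region contains a disc $D$ of radius $R$ centred at $\bz_0$. If some unit hole has its centre within distance $R+1$ of $\bz_0$, it intersects $D$, so the hull of $D$ and that hole is lost. More usefully, if there are holes meeting $D$ in several well-spread directions, then the hull of $D$ together with these holes contains a larger disc. Since each hole sticks out of $D$ by up to $2$, suitably spread holes give a disc of radius about $R+c$, with $c>0$ a small absolute constant. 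Concretely, split the annulus $\{R< \n{\bz-\bz_0}\le R+1\}$ into $k$ angular sectors. If every sector contains a hole centre, the hull contains the disc of radius $R+c$ as long as $k$ is large and the sectors are narrow enough; a sector of angle $\theta$ costs only $O(R\theta^2)$ in the sagitta. To avoid that deficit I would instead use a fixed number of sectors of arc length about $1$, so there are $k\approx 2\pi R$ of them. The hull of unit discs centred within distance $\sim1$ of each other along a circle of radius $R+1/2$ contains the disc of radius $R+1/2-O(1/R)$, which is enough.

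Next, the growth probability. Each sector has area of order $1$. The disc lies near distance $\rho=\n{\bz_0}$ from the origin and has radius $R\ll\rho$, so a sector is empty with probability about $e^{-c g(\rho)}\approx 1-cg(\rho)$. Waiting for all $2\pi R$ sectors to be occupied simultaneously is hopeless. Instead I would use the standard bootstrap trick of growing a square (or octagon) one side at a time. A side of length $L$ advances by a unit strip if some hole centre lies in the adjacent strip of area $\sim L$. Taking the hull of the square with that single hole, and then using later holes, pushes the whole side out eventually. Under convex hull this is even easier: one hole just outside the midpoint of a side, together with the square, already produces a hull that contains a slightly larger square shifted by an amount of order $1/L$. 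That is too small. So I will use the cleaner rule that the side advances by $1$ if each of its two half-strips contains a point. The side then fails to advance with probability about $2e^{-cLg(\rho)}$.

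Now the quantitative balance, the nucleation estimate. A seed is a cluster of $m$ overlapping discs in a box of size $O(m)$, whose hull contains a square of side $L_0$. We need $L_0 g(\rho)\gg\log$ of the number of growth steps. Starting from side $L_0$ with $L_0g\ge K\log\rho$, the growth failure probabilities sum to at most $\sum_L \rho^{-K}$, which is small all the way up to $L\sim\rho$. Beyond that scale $g$ varies slowly (polylog), the failure probabilities decay like $e^{-Lg(L)}$, and these are summable since $Lg(L)\gg\log L$. This gives a.s.\ covering of the plane once a seed exists. A seed of side $L_0=K\log\rho/g(\rho)=K\log^{3/2-\d}\rho$, made of a chain of about $L_0$ discs in a thin configuration (two crossing diagonal chains suffice, since their hull is a square), has probability at least $g^{\,CL_0}=\exp(-CL_0\log(1/g))\approx\exp(-C'\log^{3/2-\d}\rho\,\log\log\rho)$. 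That is far too small compared with the roughly $\rho^2$ independent locations at distance $\rho$.

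So the seed has to be built adaptively, and this is the main obstacle. I would grow from side $L$ to side $L+1$, where growth needs about $1$ point per strip of area $L$. Failure then has probability $e^{-Lg}$, significant only while $L\lesssim 1/g$. The probability of reaching side $L^*=K\log\rho/g$ from a single disc is about $\prod_{L\le L^*}(1-e^{-Lg})\approx\exp(-\sum_L e^{-Lg})\ge\exp(-C\log(1/g)/g)$. With $1/g=\log^{1/2-\d}\rho$ this equals $\exp(-C\log^{1/2-\d}\rho\log\log\rho)=\rho^{-o(1)}$. Hence among roughly $\rho^{2}g$ disjoint candidate locations in the annulus at radius $\rho$, one succeeds with overwhelming probability; in fact the exponent $1/2-\d$ leaves room even for a sharper product bound. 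Taking $\rho$ along a sequence and applying Borel–Cantelli gives a seed a.s., and the second stage then covers $\R^2$. The delicate points are twofold. The first is getting the prefactor in the small-$L$ phase right, where the product behaves like $g^{c/g}$ and this is where $\d>0$ is used. The second is justifying the one-step ``side advances'' rule under hull closure with unit discs.
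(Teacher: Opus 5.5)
Your two-stage outline (nucleus, then spreading) matches the paper's. However, the growth rule that all your estimates rest on is false, and this is a genuine gap.

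Under $\phi$, the square $[0,L]^2$ together with one unit hole in each half of the strip $[0,L]\times[L,L+1]$ need not contain $[0,L]\times[0,L+1]$. If both centres lie near the middle of the side, the hull adds only the region under the tangent segments from the two corners. At distance $1$ from a corner that region has height $O(1/L)$. Advancing a corner needs a centre in a region of area $O(1)$ next to it, which has probability $O(g)$, not $1-e^{-cLg}$. After one step the body is no longer a square anyway.

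For a round body, the correct count comes from the sagitta trade-off you considered and then resolved at the wrong end. A unit hole meeting $B_{u-1}(\bz_0)$ has its centre within distance $u$ of $\bz_0$. So it raises the support function of the hull to $u$ only for directions within angle $O(u^{-1/2})$ of its centre's direction. Containing $B_u(\bz_0)$ requires support at least $u$ in every direction. Hence of order $\sqrt u$ holes are needed per unit of growth, one per sector of angle $\asymp u^{-1/2}$, not of arc length $1$.

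This is exactly Lemma~\ref{lem2}: it uses $M_u=\floor{10\sqrt u}$ sectors of $Q_u(\bz_0)$. The sagitta loss $u\alpha_u^2/2\approx\pi^2/50<\frac12$ is absorbed by the overhang of the holes, and each sector has area about $0.3\sqrt u$. So the per-step failure probability is about $\sqrt u\,e^{-0.3\sqrt u\,g}$, not $e^{-Lg}$. Growth becomes reliable only once $\sqrt u\,g\gg\log(1/g)$.

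The same error distorts your nucleation step. With the correct rule, your adaptive growth from a single disc up to radius $g^{-2}$ succeeds with probability only about $\prod_{u\lesssim g^{-2}}(c\sqrt u\,g)^{10\sqrt u}=\exp(-\Theta(g^{-3}))=\exp(-\Theta(\log^{3/2-3\d}\rho))$. For $\d<1/6$ this is smaller than any power of $\rho$, so the $O(\rho^2)$ candidate locations cannot compensate.

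The paper instead builds the seed in one shot, using precisely the ring configuration you called hopeless for growth. The disc $B_{R_n}(\bw_n)$, with $R_n=\log^{1-\d}n$, is lost after one application of $\phi$ as soon as each of $\floor{13R_n}$ sectors of its outer ring of width $\frac12$ contains a centre. This costs only $(g/10)^{13R_n}\ge 1/n$, linear in the radius, and this $R_n$ already gives $\sqrt{R_n}\,g\approx\log^{\d/2}n\to\infty$. That is where $\d>0$ enters: one needs roughly $g^{-2}\log^2(1/g)\ll R\lesssim\log n/\log\log n$. Your crossing chains were in this spirit. But your optimistic rule made you demand $L_0g\ge K\log\rho$, when what is needed is $\sqrt{L_0}\,g\gg\log(1/g)$.

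Finally, you do not address dependence. Choosing a successful seed among many candidates conditions the other candidates to fail. That is a decreasing event, so it can only lower the probability of the increasing growth event on overlapping regions. The paper handles this as follows:
1. It places seeds on the positive $x$-axis at $\bw_n=(n\log^2 n,0)$.
2. It takes the first good seed $\tau\ge n_1$.
3. It grows only with centres outside $B_{R_n}(\bw_n)\cup F_\nu(\bw_n)$, with $\nu<\d$.

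That region is disjoint from all earlier seeds, so the growth event is independent of $\{\tau=n\}$.
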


Let 
$$B_r(\bz_0)=\{\bz\in\mathbb R^2:\n{\bz-\bz_0}\le r\}
$$
 denote a disc of radius $r>0$ centred at $\bz_0\in \R^2$.
\begin{defn}
Let $n_0$ be a sufficiently large positive integer. For each $n=n_0+1,n_0+2,\dots$  define {\em the $n$th  seed} as $B_{R_n}(\bw_n)$ where $\bw_n=(\ell_n,0)$ with $\ell_n=n\log^2 n$ and $R_n=\log^{1-\d} n$.
\end{defn}
\begin{lemma}\label{lemind}
Assuming $n_0$ is sufficiently large, the seeds do not intersect; moreover, the distance between consecutive seeds, say the $n$th and the $n+1$st, grows much faster than $R_n^2$ as $n\to\infty$.
\end{lemma}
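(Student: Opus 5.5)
All seed centres lie on the positive horizontal axis, so the distance between the $n$th and $(n+1)$st seeds is simply
\[
D_n=\ell_{n+1}-\ell_n-R_n-R_{n+1}.
\]
Both claims will follow once I show that $D_n$ is positive and that $D_n/R_n^2\to\infty$. For non-consecutive seeds I will use that $\ell_n$ is increasing. Then for $m>n+1$ the gap between the $n$th and $m$th seeds is at least $\ell_m-\ell_{n+1}+D_n$. This is at least $D_n$, provided $R_m\le R_{m-1}+\ell_m-\ell_{m-1}$, which is trivial. More simply, I can note that $\ell_m-\ell_n\ge \ell_{n+1}-\ell_n$ while $R_m\le R_{n+1}\cdot(\log m/\log(n+1))^{1-\delta}$ grows far slower than $\ell_m-\ell_{n+1}$.

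For the main computation I use the mean value theorem applied to $f(x)=x\log^2 x$. Its derivative is $f'(x)=\log^2 x+2\log x$, which is increasing for $x>1$. Hence
\[
\ell_{n+1}-\ell_n\ge f'(n)=\log^2 n+2\log n\ge \log^2 n .
\]
Meanwhile $R_n+R_{n+1}\le 2\log^{1-\delta}(n+1)\le 3\log^{1-\delta} n$ for large $n$. So
\[
D_n\ge \log^2 n-3\log^{1-\delta}n,
\]
and this is positive for $n>n_0$ with $n_0$ large. On the other hand $R_n^2=\log^{2-2\delta}n$. Therefore
\[
\frac{D_n}{R_n^2}\ge \log^{2\delta}n-3\log^{-1+\delta}n\to\infty,
\]
since $\delta>0$. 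This gives the ``much faster than $R_n^2$'' claim for consecutive seeds.

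It remains to handle non-consecutive pairs $n<m$. The distance between these seeds is $\ell_m-\ell_n-R_n-R_m$. I will write $\ell_m-\ell_n=\sum_{k=n}^{m-1}(\ell_{k+1}-\ell_k)\ge\sum_{k=n}^{m-1}\log^2 k$. I will also bound $R_m\le \log^{1-\delta}m$. Since $\log^2(m-1)$ alone exceeds $\log^{1-\delta}m+\log^{1-\delta}n$ for large indices, the distance is positive. In fact it is at least $D_n$, because the added increments dominate the increase of $R_m$ over $R_{n+1}$.

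The only mild obstacle is bookkeeping the choice of $n_0$ so that every inequality holds simultaneously. I will simply take $n_0$ large enough that $\log^2 n>3\log^{1-\delta}n$ for all $n\ge n_0$. Everything is elementary calculus; no probabilistic input or earlier result is needed.
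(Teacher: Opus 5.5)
Your proof is correct and follows essentially the same route as the paper: you bound the spacing $\ell_{n+1}-\ell_n$ below by $\log^2 n$ (you via the mean value theorem, the paper via $(n+1)\log^2(n+1)-n\log^2 n\ge\log^2(n+1)$) and compare with $R_n^2=\log^{2-2\delta}n$ to get a ratio of order $\log^{2\delta}n$. There is one small slip. For $m>n+1$ the gap equals $\ell_m-\ell_{n+1}+D_n-(R_m-R_{n+1})$, so it is at most, not at least, $\ell_m-\ell_{n+1}+D_n$; your later observation that the increments of $\ell$ dominate those of $R$ repairs this, and in any case non-consecutive disjointness is automatic, since the discs are centred on a common line and consecutive ones have ordered, disjoint projections.
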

\begin{proof}
The distance between the centres of the consecutive seeds (which are the closest to each other) satisfies
\begin{align*}
 (n+1)\log^2(n+1)-  n\log^2 n \ge \log^2 (n+1)\gg  2 \log^{2-2\d} (n+1) =2 R_{n+1}^2
\end{align*}
for sufficiently large $n$, since $\delta>0$. 
Hence $B_{R_n}(\bw_n)\cap B_{R_{n+1}}(\bw_{n+1})=\emptyset$ and 
$$
\inf\left\{\n{\bz'-\bz''}:\ 
\bz'\in B_{R_n}(\bw_n),\ \bz''\in B_{R_{n+1}}(\bw_{n+1})\right\}
\to \infty\quad\text{as }n\to\infty.
$$
Since \(\log^2 n / R_n^2=\log^{2\delta}n\to\infty\), the distance
between consecutive seed centres is much larger than \(R_n^2\), and in particular much larger than \(R_n+R_{n+1}\). Hence the seeds are disjoint for all sufficiently large \(n\).
\end{proof}

\begin{defn}
We call the $n$th seed {\em good} if $B_{R_n}(\bw_n)\subseteq\Loss_1$ regardless of the configuration {\em outside} of the seed itself.
\end{defn}

From now on, let $X$ be a realization of the PPP with intensity $\l$ as in Theorem~\ref{th1} and let $\Loss_0$ be a collection of holes centred at the points of $X$, formally
$$
\Loss_0=\{\bz\in \R^2:\ \n{\bz-\bz'}\le 1\text{ for some }\bz'\in X\}.
$$

\begin{figure}
    \centering
    \includegraphics[width=0.3\linewidth]{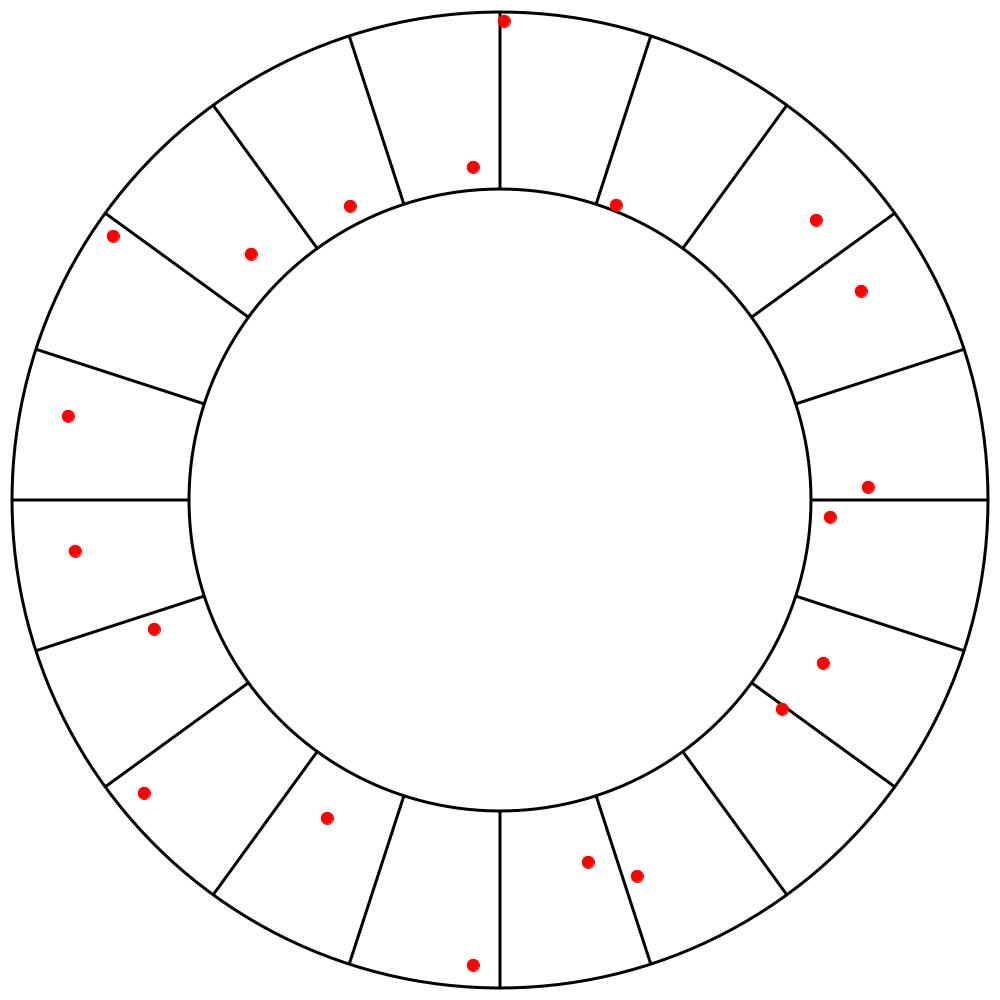}
    \qquad
    \includegraphics[width=0.4\linewidth]{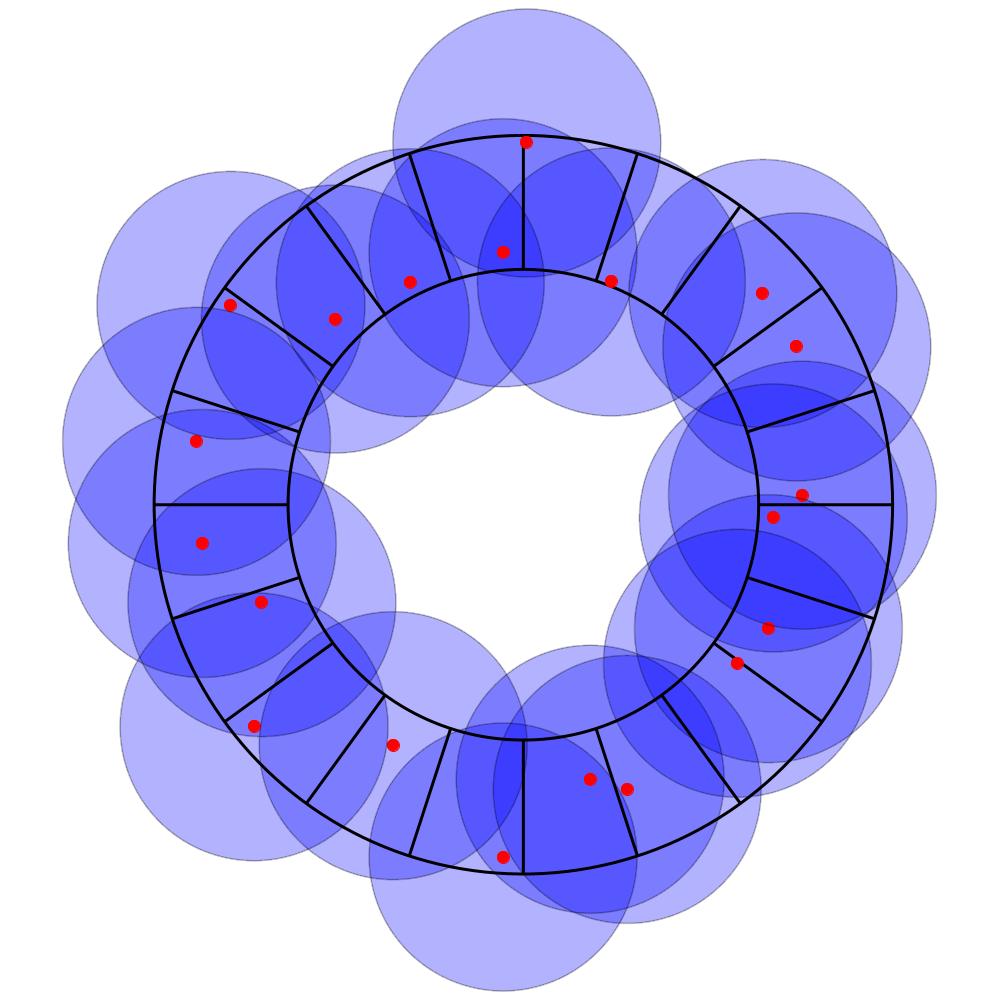}
    \caption{A good seed: each annular sector contains a point of $X$, and the convex hull of the corresponding discs completely covers the seed.}
    \label{fig:1}
\end{figure}

\begin{lemma}\label{lem1}
With probability $1$, there are infinitely many good seeds. \end{lemma}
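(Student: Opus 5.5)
The plan is to find a sufficient geometric event $E_n$, depending only on the points of $X$ inside the $n$th seed, that forces $B_{R_n}(\bw_n)\subseteq\Loss_1$. We then show $\sum_n \P(E_n)=\infty$. By Lemma~\ref{lemind} the seeds are disjoint, so the events $E_n$ are independent by the independence property of the PPP. The second Borel--Cantelli lemma then gives infinitely many good seeds a.s.

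\textbf{The event $E_n$.} Following Figure~\ref{fig:1}, cover the disc $B_{R_n}(\bw_n)$ by a family of concentric rings of width about $1$, say $\{r\in[k,k+1)\}$ for $k=0,\dots,\lceil R_n\rceil$. Split the ring at radius $k$ into about $c\,k$ annular sectors, each of diameter less than $1$. Let $E_n$ be the event that every such sector contains at least one point of $X$.

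\textbf{Deterministic step.} On $E_n$, any two points of $X$ in adjacent sectors (same ring, or neighbouring rings) are within distance $2$. Hence their unit discs intersect, and the union of all discs centred in the seed is connected. This union lies in a single component $C$ of $\Loss_0$, possibly together with outside holes; adding outside holes only enlarges the component, so the conclusion does not depend on the configuration outside the seed. The discs centred in the outermost ring have centres within distance $<1$ of every boundary point of the seed, so they cover a neighbourhood of $\partial B_{R_n}(\bw_n)$. The convex hull of a connected set containing the whole boundary circle contains the whole disc. Therefore $B_{R_n}(\bw_n)\subseteq\hull(C)\subseteq\Loss_1$. One could instead use only the outermost ring plus one radial chain, but the full covering is simplest to state.

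\textbf{Probability estimate.} There are $N_n\le C R_n^2$ sectors in total, each of area at least some constant $a>0$. Inside the seed, $\n{\bz}\le \ell_n+R_n\le 2n\log^2 n$, so $\l\ge g(2n\log^2 n)\ge c'/\log^{1/2-\d}n$. Each sector is therefore nonempty with probability at least
$$
1-e^{-ac'\log^{-(1/2-\d)}n}\ \ge\ c''\log^{-(1/2-\d)}n.
$$
By independence across disjoint sectors,
$$
\P(E_n)\ge \exp\!\bigl(-C\log^{2-2\d}n\cdot \log\bigl(C'\log^{1/2-\d}n\bigr)\bigr)=\exp\bigl(-O(\log^{2-2\d}n\,\log\log n)\bigr).
$$
This is the crux. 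We need $\sum_n \P(E_n)=\infty$, i.e.\ the exponent must be at most $(1-\epsilon)\log n$. The full-disc covering gives an exponent of order $\log^{2-2\d}n$, which is fine only if $2-2\d<1$, i.e.\ $\d>1/2$, and that is excluded. So the full covering is too expensive, and this is where I expect the main obstacle.

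\textbf{Fix: use only about $R_n$ sectors.} Keep only the outermost ring (about $2\pi R_n$ sectors) plus one radial chain. The chain is not even needed, since the outermost ring alone already makes a connected loop around the seed, whose hull is the whole disc. The ring has $O(R_n)=O(\log^{1-\d}n)$ sectors, so
$$
\P(E_n)\ge \exp\bigl(-O(\log^{1-\d}n\,\log\log n)\bigr)\ge n^{-1/2}
$$
for large $n$, and the series diverges. If the sector count per unit perimeter must be adjusted to keep adjacent points within distance $2$ and the boundary covered, only the constants change. Borel--Cantelli then finishes the proof.
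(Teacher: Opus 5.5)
Your final argument is essentially the paper's proof: keep only the outermost ring, split it into $\Theta(R_n)$ sectors of diameter less than $1$, get $\P(E_n)\ge\exp\bigl(-O(\log^{1-\delta}n\,\log\log n)\bigr)$, and finish with disjointness of the seeds and the second Borel--Cantelli lemma. The paper uses the ring $R_n-\tfrac12\le\|\bz-\bw_n\|\le R_n$ split into $\lfloor 13R_n\rfloor$ sectors and obtains $\P(A_n)>1/n$. One detail to tidy: the ring must lie inside the seed, have the circle $\|\bz-\bw_n\|=R_n$ as its outer boundary, and have width strictly less than $1$; your rings $[k,k+1)$ with $k$ up to $\lceil R_n\rceil$ satisfy none of these, whereas the paper's width-$\tfrac12$ ring satisfies all three.
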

\begin{proof}
Let $A_n$ be the event ``the $n$th seed is good''. 
Since the seeds are disjoint and \(A_n\) depends only on the configuration of \(X\) inside \(B_{R_n}(\mathbf w_n)\), the events \(A_n\) are independent by the independence of Poisson processes on disjoint regions.
We will show that $\P(A_n)\ge 1/n$, and 
then the result will follow from the second Borel-Cantelli lemma and the fact that the harmonic series diverges.

Indeed, split the ring 
$$
B_{R_n}(\bw_n)\setminus B_{R_n-1/2}(\bw_n)
=\{\bz\in \R^2:\ R_n-1/2\le \n{\bz-\bw_n}\le R_n\}
$$
into $K_n:=\floor{13 R_n}$ equal annular sectors; each sector has an angle $\alpha_n=\frac{2\pi}{K_n}$. Then for large enough $n$ we have the following:
\begin{itemize}
\item[(a)] the maximum distance between two points of the same annular sector is smaller than
$$
\alpha_n\, R_n+\frac12=\frac{2\pi R_n}{\floor{13 R_n}}+\frac12<1;
$$
\item[(b)] the maximum distance between two points of adjacent annular sectors is smaller than
$$
2\alpha_n\, R_n+\frac12=\frac{4\pi R_n}{\floor{13 R_n}}+\frac12<2;
$$
\item[(c)] the area of each annular sector is
$$
\frac12 \left(R_n^2-\left(R_n-\frac12\right)^2\right)\alpha_n=\frac{\pi(R_n-1/4)}{\floor{13 R_n}}>\frac15.
$$
\end{itemize}
Now note that a sufficient condition for the $n$th seed to be good is that each annular sector contains at least one point of $X$, as then we will have a sequence of $K_n$ overlapping discs, whose convex hull covers $B_{R_n}(\bw_n)$ entirely.
Indeed, because of (a), each annular sector will be completely covered by the hole centred in it, implying that the piece of the circumference corresponding to this sector is covered. Because of (b), every two holes in the neighbouring sectors overlap.
See Figure~\ref{fig:1}.

Since the area of each annular sector is at least $\frac15$ by (c), and $\l(\bz)\ge g(\ell_n+R_n)$ for all $\bz\in B_{R_n}(\bw_n)$ as $(\ell_n+R_n,0)$ is the furthest point from the origin belonging to the $n$th seed, the number of points of $X$ in each of the annular sectors has the Poisson distribution with the rate at least $\frac15 g(\ell_n+R_n)$, yielding
$$
\P(A_n)\ge \left[1-\exp\left(- \frac{g(\ell_n+R_n)}5\right)\right]^{K_n}
\ge \left(\frac{g(\ell_n+R_n)}{10}\right)^{K_n}=e^{K_n\log(g(\ell_n+R_n)/10)}
$$
where we used the fact that $1-e^{-x}>x/2$ for $0\le x\le 1$, which is applicable since $g(\ell_n+R_n)\to 0$.
Now
\begin{align*}
-K_n \log(g(\ell_n+R_n)/10)&=\floor{13 R_n} \log\left(10\,\log^{1/2-\d}(\ell_n+R_n)\right)  \\ & =(6.5-13\d+o(1)) \left(\log^{1-\d}  n\right) \left(\log \log n\right) 
<\log n,
\end{align*}
hence $\P(A_n)>e^{-\log n}=\frac{1}{n}$ and thus the lemma is proven.
\end{proof}

\vspace{1cm}

Let $\bz_0\in \R^2$,
$$
M_{u}=\floor{10\sqrt{u}}, \quad 
\alpha_u=\frac{2\pi}{M_{u}}=\frac{\pi}{5\sqrt{u}}(1+o(1)) \text{ as }u\to\infty,
$$
and split the ring
\begin{align}\label{eq:defQ}
Q_u(\bz_0)=B_{u}(\bz_0)\setminus B_{u-\frac12}(\bz_0)
=\left\{\bz\in \R^2:\ u-\frac12\le \n{\bz-\bz_0}\le u\right\}
\end{align}
centred at $\bz_0$ into $M_{u}$ equal annular sectors with the angle $\alpha_{u}$.

\begin{defn}
For a point $\bz_0=(x_0,y_0)$ and $\nu\in(0,1)$,  define the {\em forbidden cone} as
$$
F_\nu(\bz_0)=\{(x,y)\in\R^2:\ |y-y_0|<|x-x_0|^{\frac{1-\nu}{2}}\}.
$$
\end{defn}

\begin{figure}[ht]
    \centering
    \includegraphics[width=0.3\linewidth]{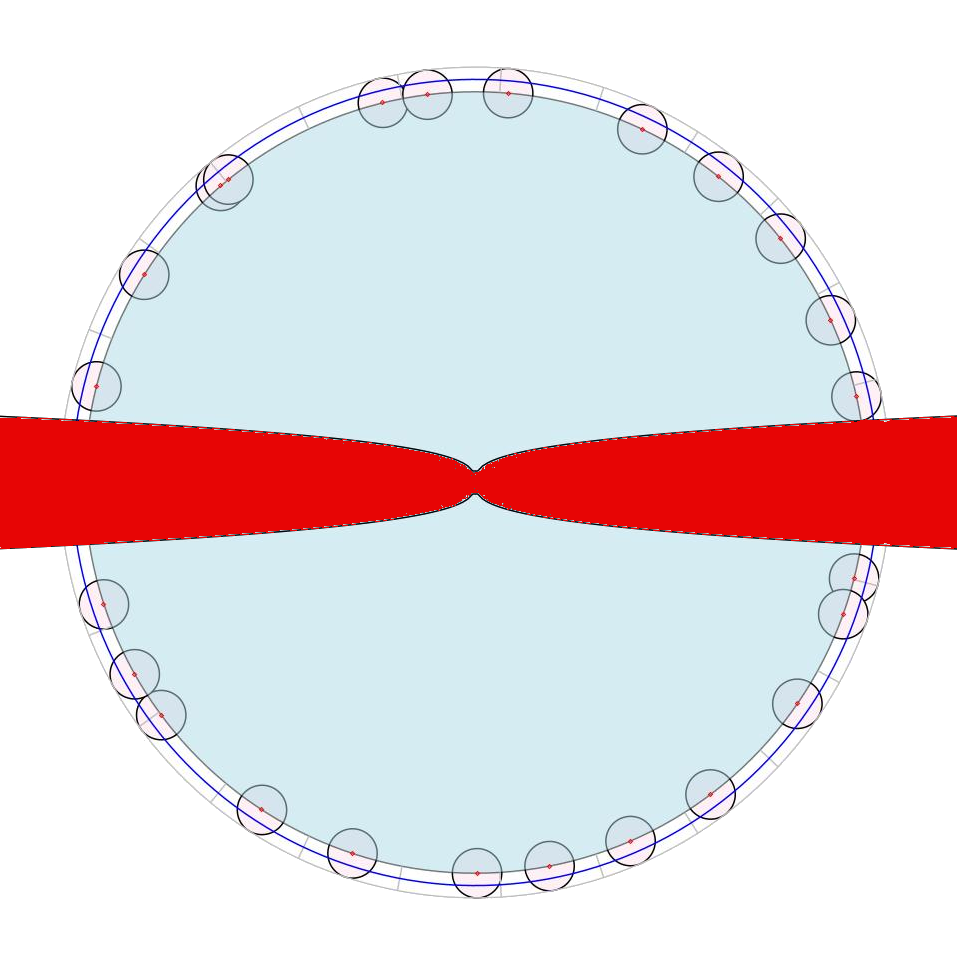}
    \caption{Forbidden cone}
    \label{fig:forb}
\end{figure}

\begin{lemma}\label{lem2}
Fix $\nu\in(0,1)$. Suppose that  $u$ is sufficiently large and that $B_{u-1}(\bz_0)\subseteq \Loss_{k}$ regardless of the configuration outside of this disc $B_{u-1}(\bz_0)$.  Then, if each of the $M_u$ annular sectors of $Q_u(\bz_0)$ defined by~\eqref{eq:defQ} contains a point of $X$ outside the forbidden cone $F_\nu(\bz_0)$, then $B_u(\bz_0)\subseteq \Loss_{k+1}$.
 (See Figure~\ref{fig:forb}.)
\end{lemma}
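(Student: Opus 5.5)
Denote by $\bz_1,\dots,\bz_{M_u}$ the chosen points of $X$, one in each annular sector of $Q_u(\bz_0)$ and each outside $F_\nu(\bz_0)$. The plan is to show that the unit discs $H_{\bz_i}$ all lie in the same connected component of $\Loss_k$ as $B_{u-1}(\bz_0)$. Then $\phi$ replaces this component by a convex set containing the disc $B_{u-1}(\bz_0)$ and all the discs $H_{\bz_i}$, and it remains to check that this convex hull contains $B_u(\bz_0)$.

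\textbf{Step 1: the discs are attached.} Each $\bz_i$ satisfies $\n{\bz_i-\bz_0}\le u$. Hence the closed unit disc $H_{\bz_i}\subseteq\Loss_0\subseteq\Loss_k$ meets $B_{u-1}(\bz_0)\subseteq\Loss_k$, at least at the point of $H_{\bz_i}$ nearest to $\bz_0$. So the union $B_{u-1}(\bz_0)\cup\bigcup_i H_{\bz_i}$ is connected, lies in $\Loss_k$, and therefore sits inside a single component $C$ of $\Loss_k$. This gives
$$\Loss_{k+1}\supseteq\hull(C)\supseteq \hull\Bigl(B_{u-1}(\bz_0)\cup\bigcup_i H_{\bz_i}\Bigr).$$
The hypothesis ``regardless of the configuration outside'' is only needed for the robustness statement. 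Since $\Loss_k$ is monotone in $X$, the conclusion also holds regardless of the outside configuration.

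\textbf{Step 2: covering the outer ring.} We need every point $\bw$ with $u-1<\n{\bw-\bz_0}\le u$ to lie in this hull. Take $\bw=\bz_0+r e^{i\theta}$. Its direction falls into some sector, say sector $j$ with angular range $[\theta_j,\theta_j+\alpha_u]$.

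First, each disc $H_{\bz_i}$ reaches beyond radius $u$ in an angular window of half-width about $\sqrt{2(\rho_i-u+1)}/u$ around its own direction, where $\rho_i=\n{\bz_i-\bz_0}\ge u-\tfrac12$. More precisely, the disc of radius $1$ at distance $\rho_i\ge u-\tfrac12$ crosses the circle of radius $u$ at angular half-width $\beta$ with $\cos\beta=(\rho_i^2+u^2-1)/(2\rho_i u)$. This gives $\beta\approx\sqrt{3/4}/u\cdot(1+o(1))$ in the worst case.

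That window is of order $1/u$, whereas the sectors have width $\alpha_u\approx \pi/(5\sqrt u)$, which is much larger. So the discs do not cover the circle by themselves; the convex hull must do the work. For two consecutive points $\bz_j,\bz_{j+1}$, the angle between them is at most $2\alpha_u\approx 2\pi/(5\sqrt u)$. Their chord lies inside the hull, as do the chords between the discs offset outward by $1$. The outer common tangent segment of $H_{\bz_j}$ and $H_{\bz_{j+1}}$ is at distance at least
$$\min(\rho_j,\rho_{j+1})\cos(\alpha_u)+1\ \ge\ u-\tfrac12-\tfrac{u\cdot 4\alpha_u^2}{2}\cdot\tfrac14+1$$
from $\bz_0$ along the bisector, roughly. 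Since $u\alpha_u^2\approx \pi^2/25<\tfrac12$, this exceeds $u$. So the circular arc of radius $u$ between the directions of $\bz_j$ and $\bz_{j+1}$ lies in the hull of the two discs, and since $B_u(\bz_0)$ is star-shaped from $\bz_0\in B_{u-1}(\bz_0)$, the whole sector up to radius $u$ is covered. The constant $10$ in $M_u=\floor{10\sqrt u}$ is exactly what makes $u(1-\cos 2\alpha_u)$ small enough, about $0.39<\tfrac12$. Doing this for all consecutive pairs, including the pair $(M_u,1)$, covers all of $B_u(\bz_0)$.

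\textbf{Main obstacle.} The delicate point is the sagitta estimate in Step 2. One has to handle the worst case rigorously: both points at radius $u-\tfrac12$, angular separation up to $2\alpha_u$, and the $o(1)$ corrections, which must be absorbed for large $u$. The plan is to compute the distance from $\bz_0$ to the supporting line of the two discs exactly, namely $\rho\cos(\gamma/2)+1$ with $\rho=\min\rho_i$ and $\gamma$ the angular gap. This requires $(u-\tfrac12)\cos(\alpha_u)+1\ge u$, which holds since $u(1-\cos\alpha_u)\to\pi^2/50$.

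The forbidden cone plays no role in this geometric covering; the condition is presumably used only later for independence. I would remark that the lemma holds a fortiori with that restriction.
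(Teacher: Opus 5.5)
Your plan is the paper's: a rim of discs in consecutive sectors, attached to $B_{u-1}(\bz_0)$, plus a sagitta bound for each consecutive pair with angular gap $\gamma\le 2\alpha_u$. Your Step~1 spells out the component argument that the paper leaves implicit. The decisive asymptotics are $u(1-\cos\alpha_u)\to\pi^2/50<\tfrac12$. Where the paper uses the outermost points $\tilde\bz_i$ (radius $\ge u+\tfrac12$), you use the centres (radius $\ge u-\tfrac12$) plus a unit offset.

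However, the geometric claim you actually rely on in Step~2 is false as stated. The distance from $\bz_0$ to the outer supporting line (or tangent segment) of $H_{\bz_j}$ and $H_{\bz_{j+1}}$ is \emph{not} bounded below by $\min(\rho_j,\rho_{j+1})\cos(\gamma/2)+1$. Nor is the equal-radii configuration at $u-\tfrac12$ the worst case for that quantity. Take $\bz_j$ at radius $u-\tfrac12$ and $\bz_{j+1}$ at radius $u$, on either side of the common boundary ray of two adjacent sectors, and let $\gamma\to0$. The line through the centres then passes arbitrarily close to $\bz_0$, both supporting lines are at distance about $1$, and the tangent segments are at distance about $u-\tfrac12<u$. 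The arc is still covered in that case (it lies in $H_{\bz_{j+1}}$), but not for the reason you give.

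The fix is to put the estimate on the segment of centres. This is what the paper's monotonicity of $\bar\rho$ in $\rho_1,\rho_2$ really uses; it holds for segments, not for lines. Take $\bz_0=\mathbf 0$, $\rho_j=\n{\bz_j}$, and $p=t\bz_j+(1-t)\bz_{j+1}$ with $t\in[0,1]$. All terms below are nonnegative since $\cos\gamma\ge0$, so
\begin{align*}
\n{p}^2&=t^2\rho_j^2+(1-t)^2\rho_{j+1}^2+2t(1-t)\rho_j\rho_{j+1}\cos\gamma\\
&\ge\left(u-\tfrac12\right)^2\bigl(1-2t(1-t)(1-\cos\gamma)\bigr)\ge\left(u-\tfrac12\right)^2\cos^2(\gamma/2),
\end{align*}
and trivially $\n{p}\le u$. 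Hence a ray in any direction between those of $\bz_j$ and $\bz_{j+1}$ meets $[\bz_j,\bz_{j+1}]$ at a point $p$ with $u-1\le(u-\tfrac12)\cos\alpha_u\le\n{p}\le u$ for large $u$. So every point of that ray with radius in $[u-1,u]$ is within distance $1$ of $p$. It therefore lies in $\hull(H_{\bz_j}\cup H_{\bz_{j+1}})=[\bz_j,\bz_{j+1}]+B_1(\mathbf 0)$. This is exactly your inequality $(u-\tfrac12)\cos\alpha_u+1\ge u$, now attached to the right object.

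Separately, your side remark about the constant is numerically off. In fact $u(1-\cos 2\alpha_u)\to 2\pi^2/25\approx0.79$, which is not below $\tfrac12$. The relevant quantity is the half-angle one, $u(1-\cos\alpha_u)\to\pi^2/50$, as you correctly use at the end. Also, any constant larger than $2\pi$ in place of $10$ would work, so $10$ is not ``exactly'' what is needed.
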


\begin{proof}
We use a similar ``rim'' construction as the one used in the proof of Lemma~\ref{lem1}; w.l.o.g.\ $\bz_0=\mathbf{0}$.

Let us denote the consecutive points in the annular sectors as $\bz_1,\bz_2,\dots,\bz_{M_u}$. 
Then the furthest from $\mathbf{0}$ point of the hole $H_i$ centred at $\bz_i$ is $\tilde\bz_i:=\frac{\n{\bz_i}+1}{\n{\bz_i}}\bz_i$; denote its polar coordinate by $(\rho_i,\beta_i)$, $i=1,2,\dots,M_u$. 
It is enough to show that every side of the polygon with vertices
\(\tilde\bz_1,\ldots,\tilde\bz_{M_u}\), ordered by angle, lies outside \(B_u(\mathbf 0)\). Since the vertices wind once around the origin, this implies that \(B_u(\mathbf 0)\) is contained in the polygon, and hence in the convex hull of the holes.
Since each of the holes $H_1,H_2,\dots,H_{M_u}$ also intersects $B_{u-1}(\mathbf{0})$, this would mean that 
$$
\phi\left(B_{u-1}(\mathbf{0})
\cup H_{1}\cup H_{2}\cup\dots\cup H_{M_u}\right)\supseteq B_u(\mathbf{0}).
$$
This implies $B_{u}(\bz_0)\subseteq \Loss_{k+1}$.

Now it remains to show that each segment connecting two consecutive 
$\tilde\bz_i$ lies completely outside of $B_u(\mathbf{0})$. Because of rotational symmetry, it suffices to show this for $\tilde\bz_1$ and $\tilde\bz_2$ only. Indeed, we have
$$
\rho_1,\rho_2\ge u+\frac12,\quad |\beta_1-\beta_2|\le 2\alpha_{u},
$$
so the endpoints of the segment are definitely outside of $B_u(\mathbf{0})$. Let 
$$
\bar\rho=\bar\rho(\tilde\bz_1,\tilde\bz_2)=\min_{\gamma\in[0,1]} \n{\gamma \tilde\bz_1+(1-\gamma) \tilde\bz_2}
$$ 
be the shortest distance between the points of this segment and the origin; we will show that $\bar\rho\ge u$. Indeed, due to simple geometric arguments, for fixed $\beta_1-\beta_2$, the value $\bar\rho$ increases in both $\rho_1$ and $\rho_2$, hence its smallest value is achieved when $\rho_1=\rho_2=u+1/2$. As a result, $\bar\rho$ cannot be smaller than the height of the isosceles triangle with sides $u+1/2$ and the angle $|\beta_1-\beta_2|$, that is
\begin{align*}
\bar\rho&\ge  \left(u+\frac12\right)\cos\frac{\beta_1-\beta_2}{2}
\ge \left(u+\frac12\right)\cos\alpha_u
\ge \left(u+\frac12\right)\left(1-\frac{\alpha_{u}^2}{2}\right)
\\ &
=u+\frac12-\frac{\pi^2}{50}+o(1)>u,
\end{align*}
for large enough $u$, as required. 
See Figure~\ref{fig:prop}.
\begin{figure}[h]
    \centering
    \includegraphics[width=0.6\linewidth]{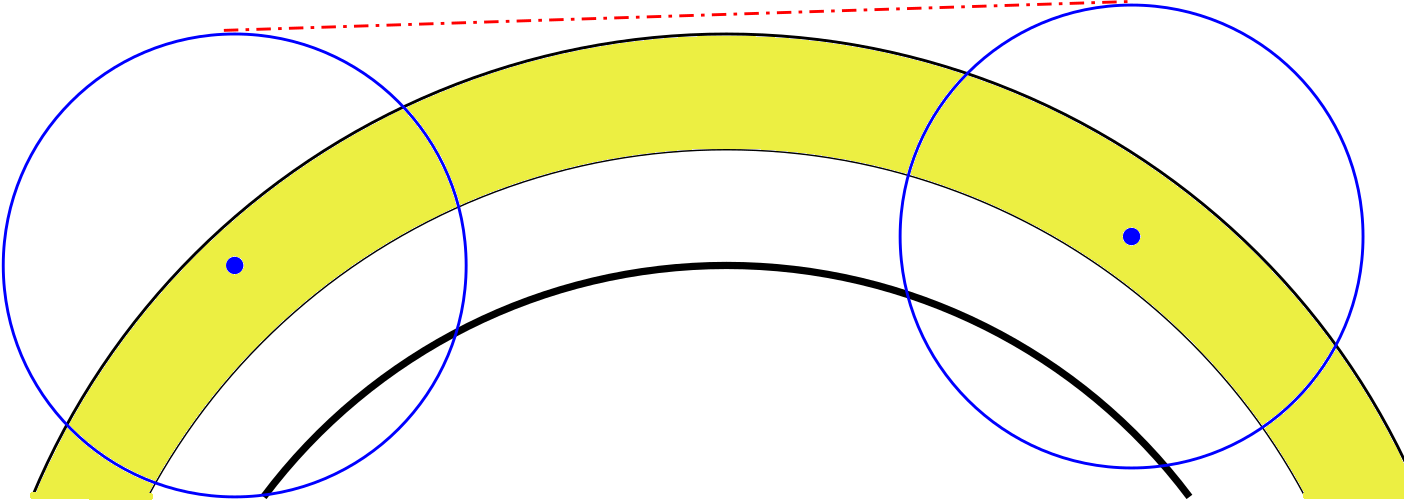}
    \caption{Propagation of convex hulls.}
    \label{fig:prop}
\end{figure}
\end{proof}

\begin{lemma}\label{lem3}
Fix $\nu\in(0,1)$. There exists a sequence $\{\gamma_n\}$, possibly depending on $\nu$, with $\gamma_n\to0$, such that, conditional on the $n$th seed being good, the following holds with probability at least $1-\gamma_n$: starting from $B_{R_n}(\bw_n)$ already destroyed, and using only holes whose centres lie outside $F_\nu(\bw_n)$, the propagation covers all of $\mathbb R^2$.
\end{lemma}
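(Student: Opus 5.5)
Starting from the good $n$th seed, I grow the destroyed disc one ring at a time using Lemma~\ref{lem2} with $\bz_0=\bw_n$. For $u=R_n+1,R_n+2,\dots$ let $E_u$ be the event that each of the $M_u$ annular sectors of $Q_u(\bw_n)$ contains a point of $X$ outside $F_\nu(\bw_n)$. If $E_u$ holds for every $u\ge R_n+1$, then by induction on $u$ (Lemma~\ref{lem2}, starting from $B_{R_n}(\bw_n)\subseteq\Loss_1$) we get $B_u(\bw_n)\subseteq\Loss_\infty$ for all $u$, so $\Loss_\infty=\R^2$. The rings $Q_u$ lie outside the seed and are pairwise disjoint. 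Hence the events $E_u$ are independent of each other and of the goodness of the seed, and conditioning is harmless. It therefore suffices to show
$$
\gamma_n:=\sum_{u\ge R_n+1}\P(E_u^c)\to0 .
$$
Note that the hypothesis of Lemma~\ref{lem2} ("regardless of the configuration outside") holds at each step, since the inductive conclusion uses only the seed and the inner rings.

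Next I bound a single sector. A sector of $Q_u(\bw_n)$ has area of order $\frac12\cdot u\alpha_u\asymp\sqrt u$, since its arc length is about $2\pi u/(10\sqrt u)$ and its width is $\frac12$. The forbidden cone meets the ring at distance $u$ only in a vertical strip near the two points on the axis $y=y_0$, of half-height at most $u^{(1-\nu)/2}$. Its intersection with the ring therefore has area $O(u^{(1-\nu)/2})$ near each of those two points, which is $o(\sqrt u)$. Consequently every sector keeps an allowed region of area at least $c\sqrt u$, for a constant $c>0$ and $u$ large.

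The intensity on $Q_u(\bw_n)$ is at least $g(\ell_n+u)$, because every point of the ring is within distance $\ell_n+u$ of the origin. This gives
$$
\P(E_u^c)\le M_u\exp\bigl(-c\sqrt u\,g(\ell_n+u)\bigr)
\le 10\sqrt u\,\exp\Bigl(-\frac{c\sqrt u}{\log^{1/2-\d}(\ell_n+u)}\Bigr).
$$

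The main obstacle is the summation, uniformly over the starting radius $u\ge R_n$, since $\ell_n$ enters through $\log(\ell_n+u)$. I would split into two cases.

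For $u\le \ell_n$ we have $\log(\ell_n+u)\le 2\log n$ for large $n$. The exponent is then at least $c\sqrt u/(2\log n)^{1/2-\d}$. Because $u\ge R_n=\log^{1-\d}n$, this exponent is at least $c'\log^{(1-\d)/2-(1/2-\d)}n=c'\log^{\d/2}n\to\infty$. I expect this to be the delicate point, and this is exactly where $\d>0$ is used. The terms then decay like $\exp(-c''\sqrt u/\log^{1/2-\d}n)$. Comparing the sum with an integral in $\sqrt u$ gives a bound of order $\mathrm{poly}(\log n)\,e^{-c'\log^{\d/2}n}$. This bound tends to $0$ because $e^{-c'\log^{\d/2}n}$ beats any power of $\log n$; I would check the exponents carefully here.

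For $u>\ell_n$ we have $\log(\ell_n+u)\le\log(2u)$. The terms are then bounded by $10\sqrt u\,\exp(-c\sqrt u/\log^{1/2}(2u))$, which is summable in $u$. Its tail over $u>\ell_n$ tends to $0$.

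Together the two cases give $\gamma_n\to0$, which completes the proof.
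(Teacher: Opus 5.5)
Your proposal is correct and is essentially the paper's proof. Both grow the destroyed disc ring by ring via Lemma~\ref{lem2}, use that the ring events are independent of each other and of the seed, bound each sector by a Poisson estimate after discarding the $o(\sqrt u)$ forbidden part, and sum the failure probabilities with a split according to the size of $u$ relative to $\ell_n$.

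The differences are cosmetic. You use the union bound $\sum_u\P(E_u^c)$ where the paper bounds $\prod_k\P(A_{n,k})\ge e^{-20S_n}$. You also merge the paper's ranges $k\le\log n$ and $\log n<k\le n\log^2 n$ into the single range $u\le\ell_n$; this works because $u\ge R_n$ already forces the exponent to be of order at least $\log^{\d/2}n$. One small imprecision: the bound $\lambda\ge g(\ell_n+u)$ cannot hold on the whole ring, only off $F_\nu(\bw_n)$, but that is all you need. For large $n$ the cone $F_\nu(\bw_n)$ contains a large neighbourhood of the origin, so the ``large $\n{\bz}$'' hypothesis of Theorem~\ref{th1} does apply on the allowed region.
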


\begin{proof}
Fix an $n\ge n_0$. We will construct a sequence of {\em independent} events $A_{n,k}$, $k=1,2,\dots$, such that 
$$
\bigcap_{i=1}^k A_{n,i}\subseteq \{B_{R_n+k}(\bw_n)\subseteq \Loss_{k+1}\}\qquad k\ge 1,
$$
and, moreover, $A_{n,k}$ are independent of the event ``the $n$th seed is good''. As a result, under this conditioning,
\begin{align}\label{eqBn}
\P(B_{R_n+k}(\bw_n)\subseteq \Loss_{k+1})\ge \prod_{i=1}^k \P(A_{n,i}).    
\end{align}
Now it only remains to show that $1-\gamma_n:=\prod_{i=1}^\infty \P(A_{n,i})$  tends to $1$ as $n\to\infty$.

Fix a good seed with the centre $\bw_n$, and let $A_{n,k}$ be the event that each of $M_{R_n+k}$ annular sectors of $Q_{R_n+k}(\bw_n)\setminus F_\nu(\bw_n)$ has a point of $X$. By Lemma~\ref{lem2}, these events have exactly the required properties described above. For large $n$ each annular sector has the area 
$$
\frac{\pi\left(\left(R_n+k\right)^2-\left(R_n+k-1/2\right)^2\right)}{M_{R_n+k}}=\frac{\pi\left(R_n+k-\frac14\right)}{\floor{10\sqrt{R_n+k}}}>0.3\sqrt{R_n+k}
$$
which holds even for the sectors which intersect the forbidden zone, as the area of $F_\nu(\bw_n)\bigcap Q_{R_n+k}(\bw_n)$ is of order $(R_n+k)^{\frac{1-\nu}2}=o(\sqrt{R_n+k})$. 

Since, after removing the forbidden cone, each such sector has area at least $0.3\sqrt{R_n+k}$, the number of points of $X$ in it is Poisson with mean at least $0.3\sqrt{R_n+k}\,g(u_n+k)$, where
$$
u_n=\ell_n+R_n=n\log^2n +\log^{1-\d}n.
$$
Indeed, for every \(\mathbf z\in Q_{R_n+k}(\mathbf w_n)\) we have
\[
\|\mathbf z\|\le \ell_n+R_n+k=u_n+k.
\]
Since \(g\) is decreasing for large arguments, and since \(n\) is chosen large enough so that the lower bound
\(\lambda(\mathbf z)\ge g(\|\mathbf z\|)\) applies on the relevant region, we get
\[
\lambda(\mathbf z)\ge g(\|\mathbf z\|)\ge g(u_n+k).
\]
As a result, we have
$$
\P(A_{n,k})\ge \left(1-e^{-0.3\sqrt{R_n+k}\,g(u_n+k)}\right)^{M_{R_n+k}}
\ge \left(1-e^{-0.3\sqrt{R_n+k}\,g(u_n+k)}\right)^{10\sqrt{R_n+k}}
$$
that is,
$$
\log \P(A_{n,k})\ge 10 \sqrt{R_n+k}\ \log\left(1-e^{-0.3\,a(n,k)}\right)
\qquad\text{where }
a(n,k)= \frac{\sqrt{R_n+k}}{\log^{1/2-\d}
(u_n+k)}.
$$
Now, 
$$
\frac{\partial a(n,k)}{\partial k}=
\frac{k\left[\log (u_n+k)-1+2\d \right]+u_n\log(u_n+k)-(1-2\d) R_n}
{2(u_n+k)\ \log^{3/2-\d}(u_n+k)\ \sqrt{k+R_n} }>0
$$
(as $u_n>R_n=\log^{1-\d}n\gg 1$), so that $a(n,k)$ is increasing in $k\ge 0$ and thus
$$
a(n,k)\ge a(n,0)=\frac{\log^{\frac{1-\d}2}n}{\log^{\frac12-\d}(n\log^2 n+\log^{1-\d} n)}
=(1-o(1))\log^{\frac{\d}2}n\to\infty\qquad\text{as }n\to\infty
$$
so, assuming that $n$ is large enough, we have $e^{-0.3a(n,k)}<3/4$ for all $k\ge 0$. Since trivially $\log(1-x)>-2x$ for $0\le x<3/4$, we conclude that
$$
\log \P(A_{n,k})\ge- 20\, e^{-0.3\,a(n,k)} \, \sqrt{R_n+k},\qquad k\ge 0.
$$
and thus
\begin{align}\label{eqSn}
\prod_{k=1}^\infty \P(A_{n,k})\ge e^{-20\, S_n}
\qquad\text{where }
S_n=\sum_{k=1}^\infty e^{-0.3\,a(n,k)} \, \sqrt{R_n+k}    
\end{align}
Split the sum into three parts, $S_n=\mathrm{(I)}+\mathrm{(II)}+\mathrm{(III)}$. First, for $k\le \log n$ we have
$$
\mathrm{(I)}:=\sum_{k=1}^{\floor{\log n}} e^{-0.3\,a(n,k)} \, \sqrt{R_n+k}
\le  e^{-0.3\ \log^{{\d}/2} n }\ \sqrt{2\log n}\cdot \log n
< e^{-0.3\ \log^{{\d}/2} n }\cdot  \log^2 n.
$$
Secondly, for $\log n<k\le n \log^2 n$ 
\[
a(n,k)\ge \frac23\,\frac{\sqrt{k}}{\log^{1/2-\delta}n}
=\frac{2\sqrt{k}}3\,g(n)
\]
for all large \(n\). Therefore
\[
\mathrm{(II)}:=\sum_{k=\lceil\log n\rceil}^{\lfloor n\log^2 n\rfloor}
e^{-0.3\,a(n,k)} \, \sqrt{R_n+k}
\le
\sum_{k=\lceil\log n\rceil}^{\lfloor n\log^2 n\rfloor}
2\sqrt{k}\exp\left\{-0.2\,g(n)\sqrt{k}\right\}.
\]
Since the function $f(x)=\sqrt{x}e^{-0.2g(n)\sqrt{x}}$ is decreasing in $x$ as $\sqrt{x}>5/g(n)$ and $x\ge \log n$, we have
\begin{align*}
\mathrm{(II)}&\le 2\int_{\log n-1}^{n\log^2 n} \sqrt{x}e^{-0.2g(n)\sqrt{x}}\dd{x}
\le 
2\int_{\frac14\log n}^{\infty} \sqrt{x}e^{-0.2g(n)\sqrt{x}}\dd{x}
\\
&=(5+o(1))\, e^{-0.1\log^{\d} n}\,\log^{3/2-\d} n.
\end{align*}
Finally, for $k>n\log^2 n$ we have
$$
a(n,k)\ge \frac{\sqrt{k}}{\log^{1/2-\d}k}(1+o(1))\ge \frac{k^{1/4}}{0.3}
$$
yielding
\begin{align*}
\mathrm{(III)}&:=\sum_{k=\ceil{n\log^2 n}}^{\infty} e^{-0.3\,a(n,k)} \, \sqrt{R_n+k}
\le  
2 \sum_{k=\ceil{n\log^2 n}}^{\infty} \sqrt{k}\, e^{-k^{1/4}}
\\ &
<2\int_{n\log^2 n-1}^\infty \sqrt{x}\ e^{-x^{1/4}}\dd{x}
\le 2\int_{n}^\infty \sqrt{x}\ e^{-x^{1/4}}\dd{x}=(8+o(1)) n^{5/4}
\, e^{-n^{1/4}}
\end{align*}
as the function $\sqrt{x}\ e^{-x^{1/4}}$ is monotone decreasing for large $x>0$.

Since $\lim_{x\to\infty} x^{c_1} \exp\left(-c_2 x^{c_3}\right)=0$ for any choice of $c_1,c_2,c_3>0$, we see that all $\mathrm{(I)}$, $\mathrm{(II)}$  and $\mathrm{(III)}$ all go to zero as $n\to\infty$. Therefore $S_n\to 0$ as $n\to \infty$. At the same time,
$$
\P\left(\bigcap_{k\ge 1}A_{n,k}\right)
=\prod_{k\ge 1}\P(A_{n,k})\ge e^{-20S_n}.
$$
On this event
$$
B_{R_n+k}(\bw_n)\subseteq\Loss_{k+1}\qquad\text{for every }k,
$$
and since $R_n+k\to\infty$, $\bigcup_{k\ge 1}B_{R_n+k}=\R^2$.
\end{proof}

\begin{proof}[Proof of Theorem~\ref{th1}]
Let $\nu\in(0,\d)$. Then for $n$ large enough, for each $k=1,2,\dots,n-1$ the $k$th seed lies entirely inside of the forbidden zone $F_\nu(\bw_{n})$. Indeed, for $\bz=(x,y)\in B_{R_k}(\bw_k)$ we have 
\[
|y|\le R_k\text{  and }|x-\ell_n|\ge \ell_n-\ell_k-R_k.
\]
For $k<n$, the right-hand side of the second inequality is minimized, up to lower-order terms, when $k=n-1$. Hence, for all sufficiently large $n$,
\[
|x-\ell_n|^{(1-\nu)/2} \ge
\left(\ell_n-\ell_{n-1}-R_{n-1}\right)^{(1-\nu)/2}
\gg \log^{1-\delta} n \ge R_k\ge |y|,
\]
(see the proof of Lemma~\ref{lemind}) because $\nu<\delta$. Therefore $B_{R_k}(\bw_k)\subseteq F_\nu(\bw_n)$.

Fix an $\eps>0$. Let $A_n$ be the event that the $n$th seed is good; note that $A_n$ depends solely on $X\bigcap B_{R_n}(\bw_n)$. 
Let \(C_n\) be the event that, starting with the whole disc
\(B_{R_n}(\bw_n)\) already destroyed, and using only holes with centres in
\[
X\setminus\left(B_{R_n}(\bw_n)\cup F_\nu(\bw_n)\right),
\]
the propagation {\em does not} cover all of \(\mathbb R^2\).
By Lemma~\ref{lem3} there is $n_1$ such that for all $n\ge n_1$
\[
\P(C_n)\le \eps.
\]
Let $\tau=\inf\{n\ge n_1:\ A_n\text{ occurs}\}$; by Lemma~\ref{lem1}
$\tau<\infty$ a.s. On the event \(\{\tau=n\}\cap C_n^c\),
the $n$th seed is good and the outside propagation succeeds; hence
\(\Loss_\infty(X)=\mathbb R^2\). Therefore,
\begin{align*}
\P(\Loss_\infty(X)\ne\mathbb R^2)
\le \P(C_\tau)
&=\sum_{n\ge n_1}\P(\tau =n\text{ and }C_n).    
\\ &
=\sum_{n\ge n_1}\P(\tau=n)\P(C_n)
\le
\eps\sum_{n\ge n_1}\P(\tau=n)
\le \eps
\end{align*}
because \(C_n\) is independent of \(\{\tau=n\}\).
Since $\eps$ is arbitrary, the result follows.
\end{proof}

\begin{cor}
Suppose that the holes are chosen i.i.d.\ from some collection of holes $\mathcal{H}$, and the intensity of the PPP satisfies $\lambda(\bz)\ge g(\n{\bz})$ where $g$ is defined by~\eqref{gdef}, then the membrane will eventually be destroyed a.s. 
\end{cor}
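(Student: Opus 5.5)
Every hole, being a convex set with non-empty interior that contains its centre in its interior, contains a small disc around its centre. We reduce to Theorem~\ref{th1} by a thinning and rescaling argument. For each hole shape $H\in\mathcal H$ let $r(H)>0$ be the largest radius such that $B_{r(H)}(\bz)\subseteq H_{\bz}$, where $\bz$ is the centre. Since $r(H)>0$ for every hole, we have $\P(r(H)>\rho)\to 1$ as $\rho\downarrow 0$. We fix $\rho>0$ with $p:=\P(r(H)\ge\rho)\ge 1/2$. Marking each point of $X$ independently by its hole shape and keeping only the points with $r(H_{\bz})\ge\rho$ yields, by the marking/thinning theorem, a PPP $X'$ with intensity $p\,\l(\bz)\ge \frac12 g(\n{\bz})$.

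Next, monotonicity. The operator $\phi$ is monotone in $A$: if $A\subseteq A'$, then every component of $A$ lies in a component of $A'$, so its hull lies in that component's hull. Hence $\Loss_k$ is monotone in $\Loss_0$ for every $k$, and so is $\Loss_\infty$. Since $\Loss_0(X)\supseteq \bigcup_{\bz\in X'}B_\rho(\bz)$, it suffices to show that the process started from radius-$\rho$ discs centred at $X'$ covers $\R^2$ a.s.

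Finally, rescaling. Under the map $\bz\mapsto\bz/\rho$ convex hulls and connected components are preserved, so $\phi$ commutes with scaling. The image of $X'$ is a PPP $X''$ with intensity $\rho^2 p\,\l(\rho\bz)\ge \frac{\rho^2}{2}\,g(\rho\n{\bz})$, and the holes become unit discs. It remains to check that this still satisfies the hypothesis of Theorem~\ref{th1}, possibly with a smaller $\delta$. We have $g(\rho r)=\log^{-(1/2-\d)}(\rho r)\sim g(r)$, so for large $r$
$$\frac{\rho^2}{2}\, g(\rho r)\ \ge\ \frac{\rho^2}{4}\,\log^{-(1/2-\d)} r\ \ge\ \log^{-(1/2-\d/2)} r,$$
because $\log^{\d/2}r\to\infty$ absorbs the constant. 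Thus Theorem~\ref{th1} applies with $\d/2$ in place of $\d$, and gives a.s.\ destruction.

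The only delicate point is this last step: the constant factor $\rho^2 p$ is not literally allowed by~\eqref{gdef}, and it has to be absorbed by slightly reducing $\delta$. Alternatively, one can observe that constants are harmless in the proofs of Lemmas~\ref{lem1} and~\ref{lem3}, since there they only change $6.5-13\d$ and $a(n,k)$ by constant factors. Measurability and independence issues are routine, since the marks are i.i.d.\ and independent of the locations.
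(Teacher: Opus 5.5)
Your proposal is correct and follows the paper's proof essentially step by step. Like the paper, you thin to holes whose inscribed disc about the centre has radius at least a fixed $\rho$ (an event of probability $\ge 1/2$), use the monotonicity of $\phi$, rescale by $1/\rho$ to get unit discs, and absorb the constant $\rho^2/2$ by decreasing $\delta$; your explicit check that $\frac{\rho^2}{2}g(\rho r)\ge \log^{-(1/2-\delta/2)} r$ for large $r$ fills in a detail the paper only asserts.
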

\begin{proof}

For each hole $H\in\mathcal{H}$ let $r_H$ be the maximum radius of the discs centred at the centre of the hole, completely contained inside of $H$. Then $r_H$ is a positive random variable, and hence there exists $\epsilon>0$ such that $\P(r_H\ge \epsilon)\ge 1/2$. Now, after placing the holes on the membrane, throw away all those with $r_H<\epsilon$; by this, we make the task of destroying the membrane only harder, because of the monotonicity of $\phi$. As a result, each of the remaining holes contains the disc of radius $\epsilon$, centred at the points of the PPP. Now we have reduced the model to the one in Theorem~\ref{th1}, except that the radius of all discs is $\epsilon$ and the intensity has been reduced by at most a factor \(2\).

The model can now be rescaled with the new intensity of the PPP $\tilde\lambda(\bz):=\frac{\epsilon^2}2\lambda(\epsilon\, \bz)$, so that all discs have unit radius. 
The multiplicative constant $\epsilon^2/2$ can be absorbed by replacing $\delta$ with a smaller positive value, and the conditions of Theorem~\ref{th1} are fulfilled.
\end{proof}

\section{Non-propagation}
Throughout this section, we will assume that almost surely any bounded domain of $A\subset \R^2$ has finitely many holes centred in it, which is equivalent to
\begin{align}\label{eq:noexplosion}
\Lambda(A):=\iint_A \l(\bz)\dd{\bz}<\infty,    
\end{align}
as the number of such centres has a Poisson distribution with parameter $\Lambda(A)$.

Obviously, the membrane will not be destroyed if there are only finitely many holes on the plane, regardless of the set $\mathcal{H}$, which leads to the following straightforward statement.
\begin{prop}\label{prop2}
Suppose
\begin{equation}\label{eqfinite}
\Lambda(\R^2)=\iint_{\R^2} \l(\bz)\dd{\bz}<\infty.    
\end{equation}
Then the membrane will not be destroyed a.s., that is, $\P\left(\Loss_\infty=\R^2\right)=0$.
\end{prop}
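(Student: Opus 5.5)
The plan is to show that, almost surely, only finitely many holes are placed, and that a finite collection of bounded holes can never produce loss of tension on all of $\R^2$.

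First, the total number $N=|X|$ of hole centres is Poisson distributed with parameter $\Lambda(\R^2)$, which is finite by~\eqref{eqfinite}. Hence $\P(N<\infty)=1$. If $\Lambda(\R^2)=0$ we even have $N=0$ a.s. In that case $\Loss_0=\emptyset$, so every $\Loss_k=\emptyset$ and there is nothing more to prove. From now on I therefore work on the event $\{N<\infty\}$.

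Second, I will prove a deterministic claim: if $\Loss_0$ is a finite union of bounded sets, then $\Loss_\infty\subseteq \hull(\Loss_0)$, which is bounded. This follows by induction on $k$, using monotonicity of the convex hull. Suppose $\Loss_{k-1}\subseteq K:=\hull(\Loss_0)$. Each connected component $C$ of $\Loss_{k-1}$ is then contained in the convex set $K$, so $\hull(C)\subseteq K$. Taking the union over components gives $\Loss_k=\phi(\Loss_{k-1})\subseteq K$.

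The only point needing care is boundedness of $K$. The holes are bounded convex sets by Definition~\ref{def:hole}, so each $H_\bz$ lies in a disc $B_{\rho}(\bz)$ for some finite $\rho$. With finitely many holes, $\Loss_0$ is contained in one large disc, and so is its convex hull. Hence $\Loss_\infty\subseteq K\neq\R^2$ on $\{N<\infty\}$, which gives $\P(\Loss_\infty=\R^2)\le\P(N=\infty)=0$.

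I do not expect a real obstacle here. The only subtle point is to remember that hole diameters are random but individually finite, so boundedness of the finite union does not require any uniform bound on the hole shapes.
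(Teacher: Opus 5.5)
Your proof is correct and follows the paper's argument. The Poisson count with finite mean gives finitely many bounded holes a.s., and you make explicit, via the induction $\Loss_k\subseteq\hull(\Loss_0)$, the step that the paper compresses into ``this implies the statement.''
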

\begin{proof}
The total number of holes on the whole plane has a Poisson distribution with parameter $\Lambda<\infty$. Therefore, the number of holes is a.s.\ finite, and the union of all of them lies inside of a bounded set a.s.\ (see Definition~\ref{def:hole}). This implies the statement.
\end{proof}
Note that if $\l(\bz)=f(\n{\bz})$ then~\eqref{eqfinite} is equivalent to
$$
\int_0^\infty rf(r) \dd{r}<\infty
$$
which will be true e.g.\ if $f(r)\le \frac1{r^{2+\eps}}$ for some $\eps>0$.

We can strengthen the above proposition as follows.
\begin{thm}\label{th3}
Assume that for some constant $D>0$  all holes are bounded by $D$ in size, i.e.\ $\sup_{H\in\mathcal{H}} \diam(H)< D$, and the  Poisson process is such that for large $\n{\bz}$ its intensity satisfies $\l(\bz)\le f(\n{\bz})$ where
$$
\int_0^\infty f(r)\,\dd{r}<\infty 
$$
and the function $f(r)$ is decreasing for large $r$.
 Then the membrane will not be destroyed a.s. 
\end{thm}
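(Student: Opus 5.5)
We will show that, almost surely, some circle centred at the origin survives: there is a radius $r$ that no hole crosses and whose annulus-interior forms a barrier the propagation can never cross. The natural idea is to look at annuli
$$
U_r=\{\bz:\ r\le \n{\bz}\le r+D\}.
$$
If $U_r$ contains no hole centre, then no hole meets the circle $\{\n{\bz}=r+D/2\}$, because every hole has diameter less than $D$ and contains its centre. The expected number of centres in $U_r$ is roughly $2\pi r D\,f(r)$, which need not tend to zero, since the hypothesis only gives $\int f<\infty$. So an empty annulus alone is not enough. We must exploit convexity more carefully.

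The plan is therefore a \emph{sector/gap} barrier. The propagation operator $\phi$ produces convex hulls of connected components. If the holes meeting a big disc $B_r$ and those outside never connect, the component containing the inner holes has hull inside $B_{r+D}$, by convexity of the disc. It then suffices to find infinitely many radii $r$, or just one, such that:
\begin{itemize}
\item[(i)] every hole with centre in $B_r$ produces, together with its iterates, a set contained in $B_{r}$ enlarged slightly;
\item[(ii)] no hole centred outside bridges the annulus.
\end{itemize}
Rather than attack this directly, I would use a Borel--Cantelli argument along dyadic-like radii. The number of centres in the annulus $\{k D\le\n{\bz}<(k+1)D\}$ has mean $\mu_k\le 2\pi (k+1)D^2 f(kD)$. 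That mean is not summable, but a simpler object is. Consider instead the \emph{radial} count: the set of ``levels'' $k$ whose annulus contains at least one centre. The key is that $\int f<\infty$ with $f$ decreasing gives $\sum_k f(kD)<\infty$.

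So I would change viewpoint to a \emph{ray} barrier rather than a circle. Take a half-line or strip of width $2D$ along a fixed direction, say $S=\{(x,y):x\ge x_1,\ |y|\le D\}$. Its intensity integral is at most $2D\int_{x_1}^\infty f(x)\,dx<\infty$, which tends to $0$ as $x_1\to\infty$. So a.s.\ for some $x_1$ (indeed with probability $\to1$) the strip contains no centres. Hence no initial hole meets the central line $\{y=0,\ x\ge x_1\}$. Do the same for several directions, e.g.\ four or more rays spaced around the origin. A.s.\ there is then a radius beyond which each of the chosen rays avoids $\Loss_0$.

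The main obstacle is to show that $\Loss_\infty$ also avoids (a tail of) these rays. Here is the intended argument. Suppose we pick rays in directions making angles less than $\pi$ between consecutive ones, beyond radius $\rho$, plus the bounded region $B_\rho$, which contains only finitely many holes by~\eqref{eq:noexplosion}. Any connected component of a Loss-set avoiding the rays lies in one ``sector'' between two consecutive rays, union possibly the central bounded part. A component inside a convex sector (angle $<\pi$) has hull inside that sector, so the rays are never crossed, \emph{provided} the component does not wrap around the central part. The difficulty is the central bounded cluster, which might span several sectors and whose hull could swallow rays near the origin. I would handle this as follows. Choose the rays far out and in every direction, say $N$ directions with many strips. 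The central finite cluster has bounded hull $K\subseteq B_{\rho'}$. Then choose the ray starting points beyond $\rho'+D$, with sector boundaries made of ray plus the boundary arc of a large disc avoided by holes. I expect this step of closing the barrier near the origin to require the most care. A cleaner route is monotonicity in the number of holes: $\Loss_\infty$ is contained in a bounded set union the sector-confined pieces. Since the regions between rays are convex cones truncated away from $K$, induction on $k$ shows that each $\Loss_k$ misses the rays outside $B_{\rho''}$. Therefore $\Loss_\infty\ne\R^2$.
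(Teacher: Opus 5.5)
\textbf{The gap is in your final step: the ray tails are not a barrier to propagation, because they do not split the plane into convex pieces.} The complement of the tails $\{t\mathbf e_j:\ t\ge\rho\}$ is connected, since all sectors communicate through $B_\rho(\mathbf 0)$. The truncated sectors $\{\theta_j<\arg\bz<\theta_{j+1},\ \n{\bz}>\rho\}$ are not convex.

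So the induction step fails. Suppose the rays lie along the axes. A connected component of $\Loss_k$ can avoid every ray tail, run through the central region, and reach $(R,R)$ and $(R,-R)$. Its hull then contains $(R,0)$, a point of a ray tail at distance $R$ from the origin, and $R$ can be arbitrarily large. Likewise, the hull of a component lying in a single truncated sector can dip into $B_\rho(\mathbf 0)$ and merge with the central cluster.

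Hence the central cluster is not a fixed compact $K$. It can keep absorbing material from several sectors and grow, so no radius $\rho''$ can be fixed in advance. Your claim that $\Loss_\infty$ is ``a bounded set union the sector-confined pieces'' is essentially the theorem itself. Lemmas~\ref{lem2}--\ref{lem3} show exactly how a central blob can grow through sparse holes.

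Adding ``the boundary arc of a large disc avoided by holes'' does not help. The exterior of a disc is not convex either. Once a component mixes the disc with one sector, its hull spills over the neighbouring rays.

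\textbf{The missing idea is a single full line far from the origin, as in the paper.} If a line misses $\Loss_k$, each connected component of $\Loss_k$ lies in one of the two open half-planes. These are convex, so the component's hull does too, and by induction the line misses $\Loss_\infty$.

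Your half-strip computation nearly gives existence of such a line. For $c=y_0-D$ large, the strip $\{|y-y_0|<D\}$ has intensity at most
$$4D\int_0^\infty f\bigl(\sqrt{x^2+c^2}\bigr)\dd{x}\le 4D\Bigl(c\,f(c)+\int_c^\infty f(x)\dd{x}\Bigr)\to 0.$$
This tends to zero because a decreasing integrable $f$ satisfies $c f(c)\le 2\int_{c/2}^{c}f\to0$. So almost surely some such strip is empty, and then no hole (diameter $<D$) meets its middle line. The paper uses the strips $S_h$, proves the cruder bound $\P(A_h)>1/h$, and applies the second Borel--Cantelli lemma to these independent events.

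The same fact $rf(r)\to0$ shows that your reason for discarding annuli was mistaken: the expected number of centres in $U_r$ does tend to zero. What actually disqualifies a circle as a barrier is that its exterior is not convex.
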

For example, any $\lambda$ with the corresponding $f(r)=\frac{1}{r\log^\gamma r}$ for some $\gamma >1$ and sufficiently large $r$ will satisfy the condition of the theorem.
\begin{proof}[Proof of Theorem~\ref{th3}]
It is sufficient to show that we can find an integer $h$ such that the horizontal strip
$$
S_h=\{(x,y)\in\R^2:\ 2Dh< y \le 2D(h+1)\}
$$
is completely free from the points of the Poisson process; call this event $A_h$ (see Figure~\ref{fig:strip}.)
\begin{figure}[h]
    \centering
    \includegraphics[width=0.5\linewidth]{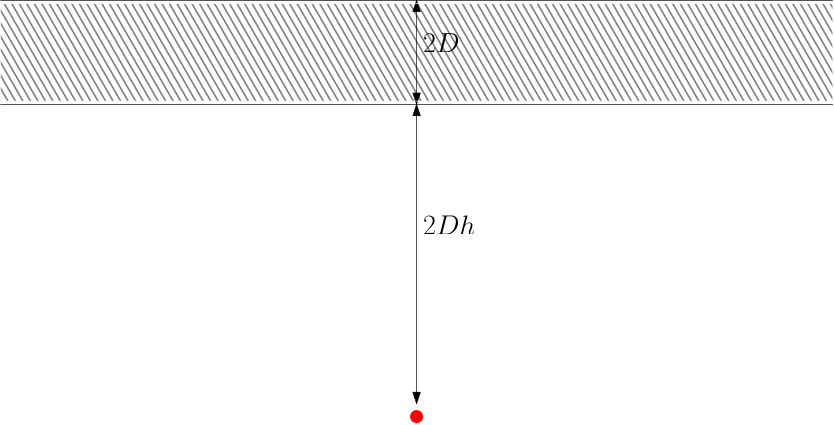}
    \caption{An impenetrable strip $S_h$ separating two parts of the plane}
    \label{fig:strip}
\end{figure}
Then the convex hull of all the holes below this strip and the convex hull of those above the strip will not have common points, and hence no point on the line $\{(x,y)\in\R^2:\ y=2Dh+D\}$ will be covered by $\Loss_\infty$.

Assume that $h$ is so large that $f(r)$ is decreasing for $r\ge 2Dh$. Now, the probability of $A_h$, i.e.,  that no point of the Poisson process lands in $S_h$ equals $\P(A_h)=e^{-\Lambda_h}$ where
$$
\Lambda_h=\iint_{S_h}\lambda(\bz) \dd{\bz}
\le \int_{-\infty}^{\infty} \left[\int_{2Dh}^{2D(h+1)} \lambda(x,y) \dd{y}\right]\dd{x}
\le 4D\int_{0}^{\infty} f\left(\sqrt{x^2+4D^2h^2}\right)\dd{x}
$$
for large enough $h$. Set $c(h)=\frac{\log h}{5D f(2Dh)}$. The integral in the RHS is bounded by
$$
4D\int_{0}^{c(h)} f(2Dh) \dd{x}+ 4D\int_{c(h)}^{\infty}  f\left(x\right)\dd{x}=\frac45 \log h+o(1)<\log h \quad \text{ for large }h
$$
where $o(1)\to 0$ as $h\to+\infty$ since $c(h)\to\infty$.
Hence $\Lambda_h<\log h$ for large positive $h$ yielding that $\P(A_h)>\frac 1h$. Since the events $\{A_h\}$ are independent and $\sum_{h=1}^\infty \frac1h=\infty$, it implies that a.s.\ there exists an $h$ for which $A_h$ occurs and hence $\Loss_\infty\ne\R^2$.
\end{proof}

\begin{figure}[!ht]
\centering
\resizebox{0.5\textwidth}{!}{%
\begin{circuitikz}
\tikzstyle{every node}=[font=\fontsize{18.2pt}{23.7pt}\selectfont]
\draw  (1.25,15.125) rectangle (21.25,13.875);
\draw  (2.5,16.375) rectangle (3.75,0.125);
\draw  (1.25,2.625) rectangle (21.25,1.375);
\draw  (18.75,16.375) rectangle (20,0.125);
\draw  (6.25,11.375) circle (1.25cm);
\draw  (8.75,11.375) circle (1.25cm);
\draw  (7.5,10.125) circle (1.25cm);
\draw  (5.625,8.875) circle (1.25cm);
\draw  (6.25,6.375) circle (1.25cm);
\draw  (7.5,7.625) circle (1.125cm);
\draw  (7.5,4.5) circle (1.25cm);
\draw  (9.25,5.875) circle (1.125cm);
\draw  (9.375,8.875) circle (1.25cm);
\draw  (10.625,7) circle (1.25cm);
\draw  (11.75,9.375) circle (1.25cm);
\draw  (10.625,11) circle (1.125cm);
\draw  (13.125,6.875) circle (1.125cm);
\draw  (13.25,8.5) circle (1.375cm);
\draw  (11.875,5.625) circle (1.125cm);
\draw  (10.5,4.5) circle (1.125cm);
\draw  (13.5,11) circle (1.125cm);
\draw  (12.125,11.625) circle (1.125cm);
\draw  (14.75,9.625) circle (1.125cm);
\draw  (14.875,7.625) circle (1.125cm);
\draw  (16,8.5) circle (1.25cm);
\draw  (14.5,6) circle (1cm);
\draw  (13.25,5.25) circle (1.125cm);
\draw  (15.75,11.875) circle (1cm);
\draw  (17.125,12.5) circle (1cm);
\draw  (15.875,10.5) circle (1cm);
\draw  (14.375,12.375) circle (0.875cm);
\draw  (13.625,3.75) circle (1cm);
\draw [dashed] (6.25,12.625) -- (17.25,13.5);
\draw [dashed] (18.125,12.375) -- (17.25,8.125);
\draw [dashed] (17.25,8.125) -- (14.5,3.25);
\draw [dashed] (13.75,2.75) -- (7,3.25);
\draw [dashed] (6.5,3.875) -- (5.125,5.75);
\draw [dashed] (5.125,5.875) -- (4.25,9);
\draw [dashed] (4.25,9.125) -- (5.125,12.25);
\draw [dashed] (3.75,15.125) -- (4.875,14);
\draw [dashed] (5.125,15.125) -- (6.125,14);
\draw [dashed] (6.25,15) -- (7.25,14);
\draw [dashed] (7.5,15.125) -- (8.625,14);
\draw [dashed] (8.75,15.125) -- (9.875,14);
\draw [dashed] (10,15.125) -- (11.125,14);
\draw [dashed] (11.375,15) -- (12.375,14.125);
\draw [dashed] (12.5,15) -- (13.625,14.125);
\draw [dashed] (13.75,15.125) -- (14.875,14.125);
\draw [dashed] (15,15) -- (16.125,14);
\draw [dashed] (16.25,15) -- (17.375,14.125);
\draw [dashed] (17.5,14.875) -- (18.25,14.125);
\draw [dashed] (18.75,15) -- (19.75,14);
\draw [dashed] (20.125,15) -- (21,14);
\draw [dashed] (18.875,16.25) -- (19.875,15.25);
\draw [dashed] (2.625,16.375) -- (3.75,15.25);
\draw [dashed] (2.5,15.125) -- (3.75,14);
\draw [dashed] (1.375,15.125) -- (2.375,14);
\draw [dashed] (2.5,13.875) -- (3.625,12.75);
\draw [dashed] (2.625,12.625) -- (3.625,11.5);
\draw [dashed] (2.5,11.375) -- (3.625,10.25);
\draw [dashed] (2.5,10.125) -- (3.625,9);
\draw [dashed] (2.5,8.75) -- (3.75,7.625);
\draw [dashed] (2.5,7.625) -- (3.5,6.5);
\draw [dashed] (2.5,6.375) -- (3.625,5.125);
\draw [dashed] (2.5,5.125) -- (3.625,4);
\draw [dashed] (2.5,3.75) -- (3.5,2.75);
\draw [dashed] (2.5,2.625) -- (3.625,1.625);
\draw [dashed] (3.75,2.5) -- (4.75,1.5);
\draw [dashed] (5,2.5) -- (6.125,1.625);
\draw [dashed] (6.25,2.375) -- (7.25,1.625);
\draw [dashed] (7.375,2.375) -- (8.5,1.625);
\draw [dashed] (8.625,2.375) -- (10,1.5);
\draw [dashed] (9.875,2.375) -- (11.25,1.5);
\draw [dashed] (11.25,2.5) -- (12.25,1.625);
\draw [dashed] (12.5,2.5) -- (13.5,1.625);
\draw [dashed] (13.75,2.5) -- (14.875,1.625);
\draw [dashed] (15,2.5) -- (16.125,1.5);
\draw [dashed] (16.25,2.5) -- (17.375,1.625);
\draw [dashed] (17.5,2.5) -- (18.625,1.5);
\draw [dashed] (18.75,2.625) -- (20,1.625);
\draw [dashed] (20,2.375) -- (21,1.625);
\draw [dashed] (18.75,1.25) -- (19.625,0.5);
\draw [dashed] (2.625,1.25) -- (3.625,0.25);
\draw [dashed] (1.375,2.625) -- (2.375,1.625);
\draw [dashed] (18.75,3.75) -- (19.875,2.875);
\draw [dashed] (18.75,5.125) -- (19.75,4.125);
\draw [dashed] (18.75,6.25) -- (19.875,5.25);
\draw [dashed] (18.75,7.375) -- (19.75,6.625);
\draw [dashed] (18.75,8.625) -- (19.875,7.75);
\draw [dashed] (18.875,10.125) -- (19.75,9);
\draw [dashed] (18.75,11.25) -- (19.75,10.25);
\draw [dashed] (18.75,12.5) -- (19.75,11.5);
\draw [dashed] (18.75,13.625) -- (19.875,12.875);
\node [font=\fontsize{18.2pt}{23.7pt}\selectfont, inner xsep=0.080cm, inner ysep=0.085cm, rounded corners=0.020cm] at (9.25,9) {z .};
\end{circuitikz}
}%
\caption{A box surrounding the point $\bz$.}\label{fig:encircled}
\end{figure}

\begin{cor}
In the setup of Theorem~\ref{th3}, $\Loss_\infty$ will consist of only bounded sets a.s.
\end{cor}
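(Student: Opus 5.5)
The plan is to enclose every point of the plane inside a ``box'' of four empty strips, as in Figure~\ref{fig:encircled}. We run the argument from the proof of Theorem~\ref{th3} in four directions: upward and downward for horizontal strips, and rightward and leftward for vertical strips. Each strip is closed under rotation of the whole picture by $\pi/2$. Define
\[
S_h=\{(x,y):\ 2Dh<y\le 2D(h+1)\}, \qquad S'_h=\{(x,y):\ 2Dh<x\le 2D(h+1)\},
\]
for $h\in\Z$. The estimate in the proof of Theorem~\ref{th3} used only $\l(\bz)\le f(\n{\bz})$ with $f$ decreasing and integrable. It is symmetric under $y\mapsto -y$ and under exchanging the coordinates, so it gives $\P(S_h\cap X=\emptyset)>1/|h|$ for all $|h|$ large, and likewise for $S'_h$. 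For each of the four families the events are independent over $h$ (the strips are disjoint), and $\sum 1/|h|=\infty$. The second Borel--Cantelli lemma then shows that a.s.\ there are infinitely many empty strips $S_h$ with $h\to+\infty$, infinitely many with $h\to-\infty$, and the same for $S'_h$. The four countable families of events are intersected at once, so a single full-probability event suffices.

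On this event, fix $\bz\in\R^2$ and choose empty strips $S_{h_1},S_{h_2},S'_{h_3},S'_{h_4}$ with $h_1<h_2$, $h_3<h_4$, which together surround $\bz$ strictly inside the open rectangle
\[
R=\{(x,y):\ 2D(h_3+1)<x\le 2Dh_4,\ 2D(h_1+1)<y\le 2Dh_2\}.
\]
Now look at the holes centred in $R$. There are finitely many of them by~\eqref{eq:noexplosion}, since $R$ is bounded. Because every hole has diameter less than $D$, each of these holes lies in the $D$-neighbourhood of $R$, which is a bounded convex set, call it $R^+$; any convex hull of such holes also stays in $R^+$. Every other hole is centred outside $R$ and off the four empty strips. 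Hence it lies in the complement of the $D$-shrunk strips, that is, beyond the midlines $y=2Dh_1+D$, $y=2Dh_2+D$, $x=2Dh_3+D$, $x=2Dh_4+D$ on the far side from $R$.

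The key step is an invariant, proved by induction on $k$. The claim is that every connected component of $\Loss_k$ either lies in $R^+$, or lies entirely in one of the four ``outer'' regions: the half-planes beyond the midlines. The real difficulty is that a component could be L-shaped, wrapping around a corner of the box, and its convex hull could then cut into $R$. Two observations should resolve this. First, any component that is a subset of $R^+$ has a convex hull that is again in $R^+$. Second, the midline ``frame'' around $R$ (the boundary of the rectangle formed by the four midlines) is never touched.

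So I would state the invariant more precisely: $\Loss_k$ does not meet the closed rectangle frame $\Gamma$ formed by the four midlines. Inside $\Gamma$, the inner region is convex, so connected sets contained in it have hulls contained in it. A connected set outside the frame, however, may have a hull crossing into the inside. That is exactly where the corner problem appears, and it is the main obstacle.

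To handle it, I would instead use nested boxes. Choose a second ring of empty strips further out: $S_{h_1'}$ with $h_1'<h_1$, and similarly for the other three sides. The holes between the two rings are finitely many, all lying in a bounded annular box region. Components outside the outer ring live in the complement of the outer frame. I would argue that they never reach the inner frame by using the convex-hull geometry only inside each individual half-plane. Equivalently, and more simply, I would apply the original separation argument from Theorem~\ref{th3} to each of the four lines separately. All holes lie on one side or the other of each midline. By the argument there (hulls of sets on one side of a line stay on that side), no point of any of the four midlines is ever covered, because a component can never straddle a line it cannot cross.

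Thus every connected component of $\Loss_\infty$ avoids the four lines. It is connected, so it lies in one cell of the $3\times3$ grid they form. In particular, the component containing $\bz$, if any, is inside the bounded central cell, which gives boundedness. Since $\bz$ was arbitrary, all components of $\Loss_\infty$ are bounded a.s.
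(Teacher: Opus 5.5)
Your final argument is correct and is essentially the paper's proof. You show that each empty strip's midline avoids $\Loss_\infty$, since every hole lies strictly on one side of it and hulls of connected sets in an open half-plane stay there; empty strips exist in all four directions by the second Borel--Cantelli lemma; so the component containing any $\bz$ is trapped in the bounded central cell. The earlier detours (the $R^+$ invariant and the nested boxes) are unnecessary, because full-line separation already disposes of the corner problem you raised.
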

\begin{proof}
By the proof of Theorem~\ref{th3} and the second Borel--Cantelli lemma, almost surely there are infinitely many empty horizontal strips $S_h$ as $h\to+\infty$ and as $h\to-\infty$. Applying the same argument after rotating the plane by $90^\circ$, we also obtain infinitely many empty vertical strips to the left and to the right.

For each empty horizontal strip, its middle horizontal line is disjoint from $\Loss_\infty$, by the argument in the proof of Theorem~\ref{th3}. Similarly, each empty vertical strip contains a middle vertical line disjoint from $\Loss_\infty$. Hence, every point of the plane is enclosed in a finite rectangle whose boundary is disjoint from $\Loss_\infty$. Therefore, no connected component
of $\Loss_\infty$ can cross this rectangle, and every connected component of $\Loss_\infty$ is bounded. See Figure~\ref{fig:encircled}.
\end{proof}

We can also provide a weaker statement than Theorem~\ref{th3}, which covers slightly more general sets of intensities.

\begin{thm}\label{th3b}
Assume that for some constant $D>0$  all holes are bounded by $D$ in size, i.e.\ $\sup_{H\in\mathcal{H}} \diam(H)< D$, and the  Poisson process is such that
\begin{align}\label{eq:rfr}
\iint_{\R^2} \lambda^2(\bz)\,\dd{\bz}<\infty     
\end{align}
then the membrane will not be destroyed a.s.
\end{thm}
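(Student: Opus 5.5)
The plan is to show that, under \eqref{eq:rfr}, only finitely many pairs of holes can interact at all, and then to control what the resulting finitely many ``active'' clusters can do. Two holes $H_{\bz},H_{\bw}$ can intersect only if $\n{\bz-\bw}<D$. Let $N_D$ be the number of ordered pairs of distinct points of $X$ at distance less than $D$. By the second factorial moment formula for Poisson processes (Mecke/Slivnyak), and then $ab\le (a^2+b^2)/2$,
\[
\mathbb E N_D=\iint\!\!\iint_{\n{\bz-\bw}<D}\l(\bz)\l(\bw)\dd{\bz}\dd{\bw}
\le \frac12\iint\!\!\iint_{\n{\bz-\bw}<D}\left(\l^2(\bz)+\l^2(\bw)\right)\dd{\bz}\dd{\bw}
= \pi D^2\iint_{\R^2}\l^2(\bz)\dd{\bz}<\infty .
\]
Hence $N_D<\infty$ a.s. So there is a random radius $N$ such that every hole centred outside $B_N(\mathbf 0)$ is disjoint from every other hole. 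The same bound with the tail integral $\iint_{\n{\bz}>N}\l^2$ shows that $\P(\text{no close pair with a centre outside }B_N(\mathbf 0))\to1$ as $N\to\infty$.

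Next I would get a positive-probability statement. Fix a deterministic $N$ with $\P(E_N)\ge 1/2$, where $E_N$ is the event that no two centres with at least one outside $B_N(\mathbf 0)$ are within distance $D$. On $E_N\cap\{X\cap B_{N+D}(\mathbf 0)=\emptyset\}$ no two holes intersect at all. There $\phi(\Loss_0)=\Loss_0$, because each component is a single convex hole, so $\Loss_\infty=\Loss_0\ne\R^2$. By \eqref{eq:noexplosion}, $\Lambda(B_{N+D}(\mathbf 0))<\infty$, so the emptiness event has positive probability. It is essentially independent of $E_N$; to make this exact, I would define $E_N$ using only pairs both outside $B_{N+D}(\mathbf 0)$ and adjust radii. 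This gives non-destruction with positive probability.

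The main obstacle is upgrading this to probability one. The difficulty is that the finitely many close pairs near the origin produce convex hulls, and a hull may swallow isolated holes, grow, swallow more, and so on; with $\int\l=\infty$ this cascade is not obviously finite. My first attempt would be a zero--one argument. Destruction should be insensitive to the configuration in any fixed bounded region, so that it lies in the tail $\sigma$-field of the Poisson process, which is trivial by independence on disjoint regions. Then positive probability of non-destruction forces probability one.

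To justify the insensitivity, I would argue directly on $E_N$. Any growing component of $\Loss_k$ that is not a single hole must originate from the bounded set of interacting holes. For it to become unbounded it must, infinitely often, reach isolated holes whose centres are roughly at distance $D$ beyond its current hull. I would try to show, via a Borel--Cantelli estimate using $\iint\l^2<\infty$ again, that an unbounded chain of such absorptions a.s.\ fails, so every component of $\Loss_\infty$ stays bounded. This would be in the spirit of the Corollary after Theorem~\ref{th3}. The bounded initial cluster can then be handled by the tail argument. If this step resists, I would fall back on showing that some far-away isolated-hole region is never reached, using the separation of centres established in the first step.
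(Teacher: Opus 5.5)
\textbf{Steps 1 and 2 are sound, but the step that carries the theorem is missing.} Your Mecke-formula bound on the expected number of close pairs is a tidy substitute for the paper's four shifted box decompositions. Your positive-probability argument is also correct.

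One slip: the centre of a hole is an arbitrary interior point. So each hole lies within distance $D$ of its centre, and overlapping holes have centres at distance less than $2D$, not $D$. This only changes constants.

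\textbf{Where the gap is.} The upgrade to probability one is the whole content of the theorem, and it is not done.
\begin{itemize}
\item The zero--one route is circular. To show that $\{\Loss_\infty=\R^2\}$ is insensitive to the configuration in a bounded region, you must show that an arbitrary bounded cluster cannot cascade outward through the far-away holes. That is exactly what has to be proved.
\item It cannot be taken for granted either. The paper's closing Remark points out that, by the construction of Lemmas~\ref{lem2} and~\ref{lem3}, a bounded seed can destroy the whole plane even though no two holes outside a box overlap.
\item Your Borel--Cantelli estimate on ``chains of absorptions'' is never set up.
\item Your fallback is too vague, since separation of centres alone does not stop propagation.
\end{itemize}

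\textbf{The missing idea (the paper's route).} Replace your empty ball $B_{N+D}(\mathbf 0)$, which is empty only with positive probability, by an empty annulus. Such an annulus exists almost surely because annuli at different scales are independent.
\begin{itemize}
\item Take disjoint annuli $C_k$ of width $2D$ at radius $\asymp k$. Cauchy--Schwarz gives $\Lambda(C_k)^2\le |C_k|\iint_{C_k}\l^2$ with $|C_k|\asymp k$.
\item Since $\sum_k\iint_{C_k}\l^2<\infty$ while $\sum_k 1/k=\infty$, infinitely many $C_k$ have $\Lambda(C_k)<1$.
\item Each of these is empty with probability at least $e^{-1}$, independently. The second Borel--Cantelli lemma then gives an empty one, $B_{r+2D}(\mathbf 0)\setminus B_r(\mathbf 0)$, with $r$ beyond your random interaction radius.
\item Every inner hole lies in the convex disc $B_{r+D}(\mathbf 0)$, so all hulls generated from inner holes stay there.
\item Every outer hole is disjoint from that disc and from all other holes. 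Hence $\phi$ never crosses the annulus, and $\Loss_\infty\ne\R^2$.
\end{itemize}

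\textbf{An alternative that completes your own idea.} Each application of $\phi$ raises the maximal distance from the origin of the clusters that are not single holes by less than $D$. So growing from radius $r$ to $2r$ requires roughly $r/D$ distinct hole centres in an annulus of radii comparable to $r$. Cauchy--Schwarz shows the expected number of centres in that annulus is $o(r)$. A Poisson tail bound summed over dyadic $r$ then keeps all clusters bounded. Neither argument appears in your proposal, however.
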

\begin{rema}
    While Theorem~\ref{th3} covers the case with $f(r)=\frac{1}{r\log^\gamma r}$ for some $\gamma >1$, Theorem~\ref{th3b}  is applicable even if $\gamma>1/2$.
\end{rema}

\begin{proof}[Proof of Theorem~\ref{th3b}]
Since the property~\eqref{eq:rfr} is preserved under scaling, we can w.l.o.g.\ assume that $D=1$. Fix a pair $(i,j)\in\mathcal{I}$ where
$$
\mathcal{I}=\{(0,0),(0,1),(1,0),(1,1)\}
$$
is a set of four elements, and split the plane into $4\times 4$ disjoint boxes 
$$
R_{a,b}^{i,j}=\{(x,y)\in\R^2:\ 4a-2<x-2i\le 4a+2,\ 4b-2<y-2j\le 4b+2\},\qquad a,b\in\Z.
$$
The number of hole centres in such a box is Poisson($\mu_{a,b}$) where 
\begin{align*}   
\mu_{a,b}&=\Lambda(R_{a,b}^{0,0})=
\iint_{R_{a,b}^{0,0}}\lambda(\bz)\dd{\bz}
=
\int_{4a-2}^{4a+2}\int_{4b-2}^{4b+2}\lambda(\bz)\dd{\bz}
\end{align*}
yielding
\begin{align}\label{Rmore2}
\P(R_{a,b}^{0,0}\text{ contains more than one hole centre})= \sum_{k=2}^\infty \frac{\mu_{a,b}^k e^{-\mu_{a,b}}}{k!}\le \mu_{a,b}^2.
\end{align}
At the same time,
\begin{align}\label{eq:mufinite}
\sum_{a,b\in\Z}\mu_{a,b}^2<\infty  .
\end{align}
Indeed, by the Cauchy-Schwartz inequality
\begin{align*}   
\mu_{a,b}^2=\left[\iint_{R_{a,b}^{0,0}}\lambda(\bz)\dd{\bz}\right]^2
\le 16 \iint_{R_{a,b}^{0,0}}\lambda^2(\bz)\dd{\bz}
\end{align*}
and now~\eqref{eq:mufinite} follows from~\eqref{eq:rfr}

By the  Borel-Cantelli lemma, \eqref{Rmore2} and~\eqref{eq:mufinite} imply that almost surely there are only finitely many boxes $R_{a,b}^{0,0}$  with more than one centre of hole. The same argument applies to the other three shifted decompositions corresponding to \((i,j)\in\mathcal I\). Hence, almost surely, only finitely many boxes among all four decompositions contain more than one hole centre. Since two overlapping holes have centres at distance \(<2\), and any two points at distance \(<2\) lie in a common box \(R_{a,b}^{i,j}\) for at least one \((i,j)\in\mathcal I\) (see Figure~\ref{fig:chess}), it follows that there exists an a.s.\ finite random \(\nu\) such that no two holes with centres outside \(B_\nu(\mathbf 0)\) overlap.

\begin{figure}[ht]
    \centering
    \includegraphics[width=0.5\linewidth]{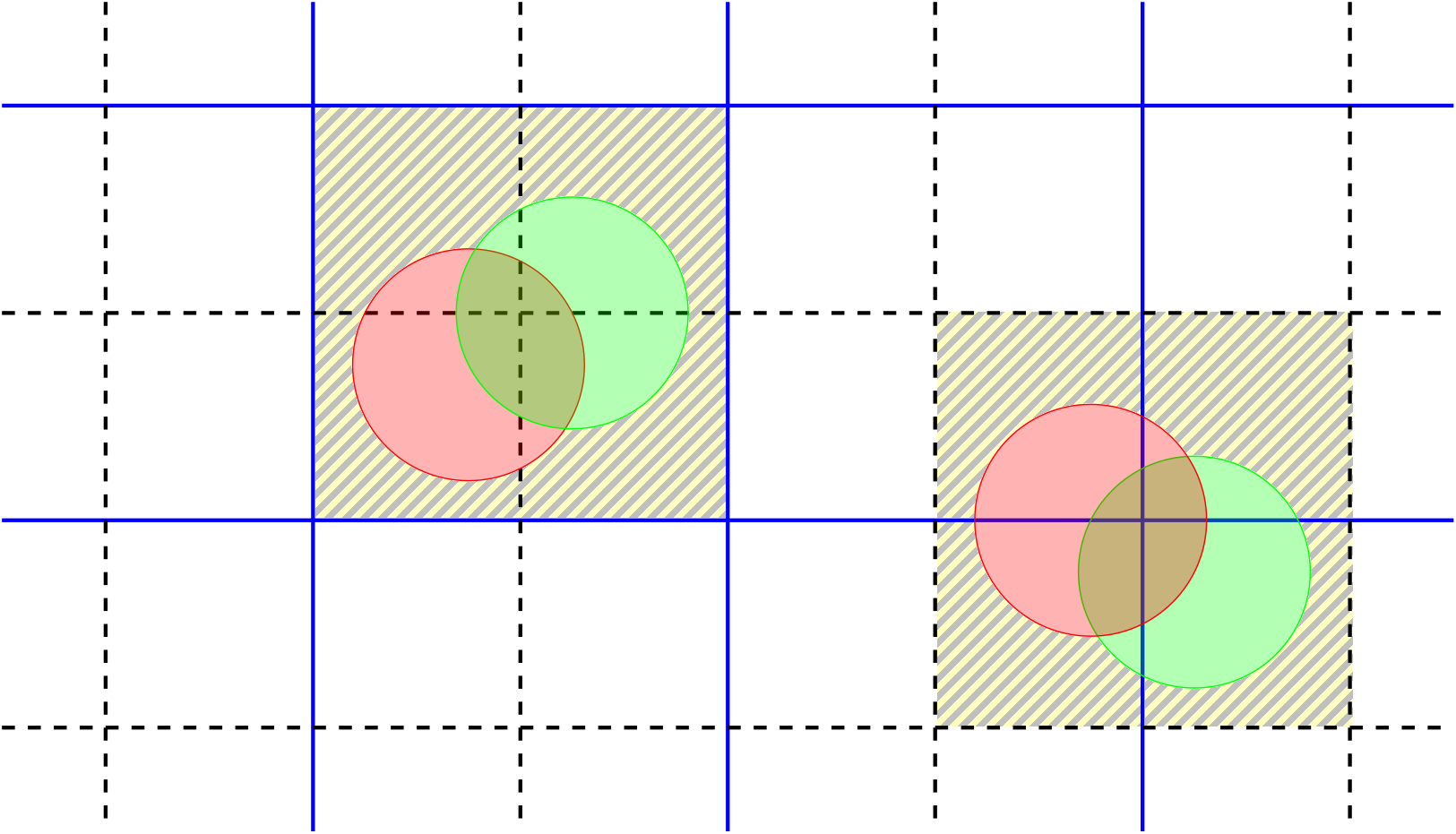}
    \caption{Two overlapping holes must have their centres in the same box $R_{a,b}^{i,j}$ for some $(i,j)\in\mathcal{I}$.}
    \label{fig:chess}
\end{figure}

Let $k>\nu/2$. Since all hole diameters are \(<1\), every hole whose centre lies in \(B_{2k}(\mathbf 0)\) is contained in \(B_{2k+1}(\mathbf 0)\). Similarly, every hole whose centre lies outside \(B_{2k+2}(\mathbf 0)\) is disjoint
from \(B_{2k+1}(\mathbf 0)\). Hence, if the annulus
\[
C_{2k}=B_{2k+2}(\mathbf 0)\setminus B_{2k}(\mathbf 0)
\]
contains no hole centres, then it separates the inside configuration from the outside configuration. In addition, since \(2k>\nu\),  all exterior holes remain isolated connected components. Consequently, applying \(\phi\) to the exterior part leaves it unchanged, while the component generated by
the holes inside \(B_{2k}(\mathbf 0)\) remains contained in
\(B_{2k+1}(\mathbf 0)\). Therefore no propagation can cross the empty annulus, and in particular $\Loss_\infty\ne \mathbb R^2$. Now it only remains to show that such an empty annulus $C_{2k}$ exists a.s.

We have
$$
\P(C_{2k}\text{ has no hole centres})=e^{-\Lambda(C_{2k})}
$$
and by the Cauchy-Schwartz inequality again
$$
\Lambda(C_{2k})=\iint_{C_{2k}} \lambda(\bz)\dd{\bz}
\le \sqrt{\iint_{C_{2k}} \lambda^2(\bz)\dd{\bz} \iint_{C_{2k}} 1^2\dd{\bz}}
$$
yielding
$$
\frac{\Lambda(C_{2k})^2}
{\pi(8k+4)}\le \iint_{C_{2k}} \lambda^2(\bz)\dd{\bz}.
$$
Since the RHS is summable by~\eqref{eq:rfr} while $\sum_k (8k+4)^{-1}=\infty$, there are infinitely many $k$ for which $\Lambda(C_{2k})^2<1$, and for each such $k$ we have $\P(C_{2k}\text{ has no hole centres})\ge e^{-1}$.
Because of the independence of these events, by the second Borel-Cantelli lemma there will be infinitely many $k$ for which this occurs, and thus we can always choose $k$ such that $2k>\nu$.

Consequently, after some time the process of taking convex hulls will stop with some finite component completely inside of $B_{2k+1}(\mathbf{0})$ and only the original holes outside of it.
\end{proof}

\begin{rema}
  Note that the fact that outside of a certain box $[-L,L]^2$ no two holes overlap, does not guarantee that the whole plane is not destroyed. Indeed, the construction in the proofs of Lemmas~\ref{lem2} and~\ref{lem3} shows just the opposite: once you have a ``good seed'' (e.g. inside the box $[-L,L]^2$) it is possible, even with quite sparsely located holes, to destroy the whole plane.
\end{rema}

\vspace{1cm}
We finish the article with an open problem.
\begin{prob}
Is there a positive function $g_\gamma(r)$ decreasing to zero as $r\to \infty$, continuously depending on a parameter $\gamma\in\R$, such that if the intensity of the PPP is given by $\l(\bz)=g_\gamma(\n{\bz})$, then $\P(\Loss_\infty=\R^2)=1$ if $\gamma>\gamma_{\mathrm{cr}}$ and $\P(\Loss_\infty=\R^2)<1$ if $\gamma<\gamma_{\mathrm{cr}}$ for some $\gamma_{\mathrm{cr}}$?
\end{prob}
A positive answer to this question would establish the sharp phase transition in our model.

\end{document}